\numberwithin{equation}{section}
\newtheorem{theoremcounter}{theoremcounter}[section]
\newtheorem{thmstarcounter}{thmstarcounter}
\newtheorem{corollary}[theoremcounter]{Corollary}
\newtheorem{lemma}[theoremcounter]{Lemma}
\newtheorem{proposition}[theoremcounter]{Proposition}
\newtheorem{theorem}[theoremcounter]{Theorem}
\newtheorem{thmstar}[thmstarcounter]{Theorem}
\theoremstyle{definition}
\newtheorem{definition}[theoremcounter]{Definition}
\newtheorem{example}[theoremcounter]{Example}
\newtheorem{problem}[theoremcounter]{Problem}
\newtheorem{remark}[theoremcounter]{Remark}
\newcommand{\cA}{\ensuremath{\mathcal{A}}}
\newcommand{\cG}{\ensuremath{\mathcal{G}}}
\newcommand{\cK}{\ensuremath{\mathcal{K}}}
\newcommand{\cN}{\ensuremath{\mathcal{N}}}
\newcommand{\cP}{\ensuremath{\mathcal{P}}}
\newcommand{\cU}{\ensuremath{\mathcal{U}}}
\newcommand{\rE}{\ensuremath{\mathrm{E}}}
\newcommand{\rM}{\ensuremath{\mathrm{M}}}
\newcommand{\rU}{\ensuremath{\mathrm{U}}}
\newcommand{\rmd}{\ensuremath{\mathrm{d}}}
\newcommand{\veps}{\ensuremath{\varepsilon}}
\newcommand{\vphi}{\ensuremath{\varphi}}
\newcommand{\ol}{\overline}
\newcommand{\eqstop}{\ensuremath{\, \text{.}}}
\newcommand{\eqcomma}{\ensuremath{\, \text{,}}}
\newcommand{\NN}{\ensuremath{\mathbb{N}}}
\newcommand{\RR}{\ensuremath{\mathbb{R}}}
\newcommand{\CC}{\ensuremath{\mathbb{C}}}
\newcommand{\id}{\ensuremath{\mathrm{id}}}
\newcommand{\ra}{\ensuremath{\rightarrow}}
\newcommand{\hra}{\ensuremath{\hookrightarrow}}
\newcommand{\ot}{\ensuremath{\otimes}}
\newcommand{\Cstar}{\ensuremath{\mathrm{C}^*}}
\newcommand{\bo}{\ensuremath{\mathcal{B}}}
\newcommand{\supp}{\ensuremath{\mathop{\mathrm{supp}}}}
\newcommand{\im}{\ensuremath{\mathop{\mathrm{im}}}}
\newcommand{\Cstarred}{\ensuremath{\Cstar_\mathrm{red}}}
\newcommand{\cont}{\ensuremath{\mathrm{C}}}
\newcommand{\contb}{\ensuremath{\mathrm{C}_\mathrm{b}}}
\newcommand{\conto}{\ensuremath{\mathrm{C}_0}}
\newcommand{\contc}{\ensuremath{\mathrm{C}_\mathrm{c}}}
\newcommand{\Ltwo}{\ensuremath{{\offinterlineskip \mathrm{L} \hskip -0.3ex ^2}}}
\newcommand{\linfty}{\ensuremath{{\offinterlineskip \ell \hskip 0ex ^\infty}}}
\newcommand{\Ad}{\ensuremath{\mathop{\mathrm{Ad}}}}
\newcommand{\grpaction}[1]{\ensuremath{\stackrel{#1}{\curvearrowright}}}
\newcommand{\fb}{\partial_F}
\newcommand{\bs}{\backslash}
\newcommand{\contblu}{\contb^{\mathrm{lu}}}
\newcommand{\papertitle}{Cocompact amenable closed subgroups: weakly inequivalent representations in the left-regular representation}
\newcommand{\authors}{Sven Raum}
\begin{document}

\thispagestyle{empty}

\begin{center}
  \begin{minipage}[c]{0.9\linewidth}
    \vskip -2em
    \textbf{\LARGE
      Cocompact amenable closed subgroups:  \\
      weakly inequivalent representations in the left-regular representation}
    \hspace{2em} \\[0.5em]
    \mbox{by \authors}$^1$

  \end{minipage}
\end{center}

\vspace{1em}

\renewcommand{\thefootnote}{}
\footnotetext{$^1$The research leading to these results has received funding from the People Programme (Marie Curie Actions) of the European Union's Seventh Framework Programme (FP7/2007-2013) under REA grant agreement n°[622322].}
\footnotetext{\textit{MSC classification:} Primary 22D10 ; Secondary 22D25}
\footnotetext{\textit{Keywords: cocompact amenable closed subgroup, \Cstar-simplicity, weakly inequivalent representations} }
\footnotetext{Last modified on \today}

\begin{center}
\begin{minipage}{0.8\linewidth}
\textbf{Abstract}.
  We show that if $H \leq G$ is a cocompact amenable closed subgroup of a unimodular locally compact group, then the reduced group $\Cstar$-algebra of $G$ is not simple.  Equivalently, there are unitary representations of $G$ that are weakly contained in the left-regular representation, but not weakly equivalent to it.  We discuss applications of this result and pose the problem to construct non-discrete topologically simple groups with a cocompact amenable closed subgroup but without a Gelfand pair.
\end{minipage}
\end{center}


\section{Introduction}
\label{sec:introduction}

If $G$ is a connected semisimple Lie group then the principal series representations of $G$ give rise to weakly inequivalent representations.  This amounts to saying that the reduced group $\Cstar$-algebra of $G$ is not simple.  Using structure theory of locally compact groups, we deduced in \cite{raum15-powers}[Theorem A] that every locally compact group that is not totally disconnected satisfies the same conclusion.

If $G = KP$ is the Iwasawa decomposition of a connected semisimple Lie group, the principal series of $G$ can be constructed from the action on its Furstenberg boundary $G \grpaction{} G/P$.  As a matter of fact, the parabolic subgroup $P \leq G$ is a maximal cocompact amenable closed subgroup of $G$.  So recent progress in the study of totally disconnected groups makes the natural question arise in how far the presence of a cocompact amenable closed subgroup of a locally compact group implies non-simplicity of its reduced group \Cstar-algebra.  In this article we confirm for the unimodular case the intuition that this should be the case.
\begin{thmstar}
  \label{thm:main}
  Let $G$ be a unimodular locally compact second countable group containing a cocompact amenable closed subgroup.  Then $\Cstarred(G)$ is not simple.
\end{thmstar}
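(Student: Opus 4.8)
The plan is to produce a single nonzero unitary representation of $G$ that is weakly contained in the left-regular representation $\lambda_G$ but is \emph{not} weakly equivalent to it; by Fell's theory this is exactly the assertion that some nonzero representation of $\Cstarred(G)$ fails to be faithful, i.e.\ that $\Cstarred(G)$ has a nonzero proper closed two-sided ideal. If $G$ is compact the conclusion is immediate from Peter--Weyl, and if $G$ is amenable (and nontrivial) the character $1_G$ satisfies $1_G\prec\lambda_G$ but is not weakly equivalent to $\lambda_G$; so I may assume $G$ is non-amenable. Since a discrete group with a finite-index amenable subgroup is itself amenable, $G$ is then non-discrete, and since $H$ compact together with $G/H$ compact would force $G$ compact, the subgroup $H$ is non-compact and $G/H$ is an infinite compact space.

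The representation I would use is the quasi-regular representation $\pi:=\Ind_H^G 1_H$ on $L^2(G/H,\mu)$, with $\mu$ a finite quasi-invariant measure (available because $G/H$ is compact and second countable). Weak containment $\pi\prec\lambda_G$ is the easy half: amenability of $H$ gives $1_H\prec\lambda_H$, and induction is continuous for weak containment, so $\Ind_H^G 1_H\prec\Ind_H^G\lambda_H$; by induction in stages the right-hand side is $\Ind_{\{e\}}^G\lambda_{\{e\}}\cong\lambda_G$. Thus $\pi$ descends to a representation $\bar\pi$ of $\Cstarred(G)$, and everything reduces to showing that $\bar\pi$ is not faithful.

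This is where cocompactness enters, and it is the engine of the argument. For $f\in\contc(G)$, unwinding the induced-representation formula (and using unimodularity of $G$) exhibits $\pi(f)$ as an integral operator on $L^2(G/H)$ whose kernel is an $H$-average of $f$ taken between two points of $G/H$. Because $f$ is continuous with compact support, this averaging is over a compact subset of $H$, locally uniformly, so the kernel is a continuous function on the \emph{compact} product $G/H\times G/H$ and hence lies in $L^2(G/H\times G/H)$. Therefore $\pi(f)$ is Hilbert--Schmidt, in particular compact; passing to norm limits, $\bar\pi$ carries all of $\Cstarred(G)$ into the algebra $\ko(L^2(G/H))$ of compact operators.

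To finish, suppose for contradiction that $\Cstarred(G)$ is simple. Then the nonzero representation $\bar\pi$ is faithful, so $\Cstarred(G)$ is isomorphic to a C$^*$-subalgebra of $\ko(L^2(G/H))$. Every C$^*$-subalgebra of the compacts is a $\mathrm{c}_0$-direct sum of elementary algebras $\ko(\mathcal H_i)$, and a simple one is a single $\ko(\mathcal H')$; as $\Cstarred(G)$ is non-unital (because $G$ is non-discrete), $\mathcal H'$ must be infinite-dimensional. Thus $\Cstarred(G)\cong\ko(\mathcal H')$, which forces $\lambda_G$ to be a type~I factor representation and the reduced dual $\widehat G_{\mathrm r}$ to be a single square-integrable point. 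Excluding this rigid configuration is, I expect, the hard part. I would rule it out using the cocompact amenable subgroup: for a unitary character $\chi$ of $H$ one has $\chi\prec\lambda_H$ (amenability of $H$), hence $\Ind_H^G\chi\prec\lambda_G$ is tempered, and such induced representations should furnish a non-discrete family in $\widehat G_{\mathrm r}$, contradicting that it is a single point (the general case, e.g.\ when $H$ has few characters, requiring a closer analysis of $\widehat G_{\mathrm r}$). Granting this, $\bar\pi$ is not faithful, $\ker\bar\pi$ is a nonzero proper ideal, and $\Cstarred(G)$ is not simple.
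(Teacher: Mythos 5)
Your route is genuinely different from the paper's, which passes through the Furstenberg boundary, Green's imprimitivity theorem, a hypergroup rigidity statement, and a Hahn--Banach extension theorem for Plancherel weights. Most of your argument is sound: the reductions to $G$ non-amenable, non-discrete and non-compact are fine; $\Ind_H^G 1_H \prec \lambda_G$ follows as you say from continuity of induction and induction in stages; the observation that cocompactness of $H$ (together with unimodularity of $G$) makes $\pi(f)$, for $f \in \contc(G)$, an integral operator with continuous kernel on the compact space $G/H \times G/H$, hence Hilbert--Schmidt, is correct and classical; and so is the structure theory of \Cstar-subalgebras of the compacts. This correctly reduces the theorem to excluding the single remaining possibility $\Cstarred(G) \cong \ko(\cH')$ with $\cH'$ infinite-dimensional.

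That exclusion, however, is a genuine gap, and the sketch you offer does not close it. First, if $H$ has no non-trivial unitary characters (amenable groups can be topologically perfect), your proposed family of induced representations is empty --- you flag this case yourself without resolving it. Second, even when characters abound, to contradict the statement that $\wh{G}_{\mathrm r}$ is a single point you must exhibit two representations weakly contained in $\lambda_G$ that are not weakly equivalent to one another; but that assertion already implies the theorem directly (at least one of the two cannot be weakly equivalent to $\lambda_G$), so this step is essentially circular: you would be proving the conclusion in order to finish the proof of the conclusion. Whether a non-trivial locally compact group can satisfy $\Cstarred(G) \cong \ko(\cH)$ is not an elementary matter. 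The gap is repairable using an input the paper itself relies on: by \cite{raum15-powers}, simplicity of $\Cstarred(G)$ forces $G$ to be totally disconnected, so there is a compact open subgroup $K \leq G$ and the projection $p_K \in \Cstarred(G)$ has finite Plancherel trace. Under an isomorphism $\Cstarred(G) \cong \ko(\cH')$ the Plancherel trace, being a densely defined faithful lower semicontinuous trace, is a positive multiple of the standard trace, so $p_K$ has finite rank and the corner $p_K \Cstarred(G) p_K$ is a finite-dimensional matrix algebra; but this corner contains the Hecke algebra $\contc(K \bs G / K)$, which is infinite-dimensional because $G$ is non-compact. With that supplement your argument becomes a complete proof, and a rather more classical one than the boundary-theoretic argument in the paper; as written, the final step is missing.
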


In order to discuss examples to which our main theorem applies, let us restrict our attention to totally disconnected groups whose amenable radical, that is the maximal normal amenable closed subgroup, is trivial.  It is easy to see that a non-trivial amenable radical implies non-\Cstar-simplicity.  In the literature on totally disconnected groups, there appear two classes of groups admitting a cocompact amenable closed subgroup.  In \cite{capracemonod15-bieberbach} groups acting properly and cocompactly on a CAT(0)-space are studied (CAT(0)-groups).  The work of Caprace and Monod implies that a totally disconnected CAT(0)-group with trivial amenable radical and with a cocompact amenable closed subgroup is a product of the following groups.
\begin{itemize}
\item Closed subgroups of the automorphisms of a tree acting 2-transitively on the boundary.
\item Semisimple algebraic groups over non-Archemedian local fields.
\end{itemize}
Further, in \cite{capracedecornuliermonodtessera12} non-discrete hyperbolic groups were studied.  \cite[Theorem 8.1]{capracedecornuliermonodtessera12} says that every totally disconnected hyperbolic group with trivial amenable radical that contains a cocompact amenable closed subgroup is isomorphic to a closed subgroup of the automorphisms of a tree that acts 2-transitively on the boundary.  To the best of our knowledge, these two results cover all known examples of totally disconnected groups with trivial amenable radical and with a cocompact amenable closed subgroup.

It is not difficult to see that all examples previously mentioned admit a Gelfand pair, that is there is a compact open subgroup $K \leq G$ such that the Hecke algebra of $K$-bi-invariant continuous and compactly supported functions is commutative.  This implies on the nose that such a group is not \Cstar-simple.  However, there is no direct proof of the existence of a Gelfand pair in CAT(0)-groups and in hyperbolic groups admitting a cocompact amenable closed subgroup.  So the passage via Caprace-Monod's and Caprace-de Cornulier-Monod-Tessera's classification theorem is necessary to derive non-\Cstar-simplicity in these cases.  The operator algebraic approach present in this article comes to this conclusion directly.  The previous discussion motivates the following two problems.

\begin{problem}
  Find examples of non-discrete topologically simple groups with a cocompact amenable closed subgroup but without a Gelfand pair!
\end{problem}

\begin{problem}
  Give a direct proof that a CAT(0)-group with trivial amenable radical and with a cocompact amenable closed subgroup admits a Gelfand pair!
\end{problem}

At the end of this introduction let us describe the strategy used to prove Theorem \ref{thm:main}.  Since the work of Kalantar-Kennedy \cite{kalantarkennedy14-boudaries} and Breuillard-Kalantar-Kennedy-Ozawa \cite{breuillardkalantarkennedyozawa14} it is known that the Furstenberg boundary $\fb G$ of a group $G$ plays a crucial role for \Cstar-simplicity of discrete groups.  In \cite{kalantarkennedy14-boudaries} it is proved that a discrete group $G$ is \Cstar-simple if and only if $\cont(\fb G) \rtimes_r G$ is simple.  If $G$ is a locally compact group admitting a cocompact amenable closed subgroup, then its Furstenberg boundary is homogeneous and Green's imprimitivity theorem implies that the crossed product $\cont(\fb G) \rtimes G$ is not simple (Corollary \ref{cor:crossed-product-non-simple}).  Our main technical result shows that, for unimodular locally compact groups, simplicity of $\Cstarred(G)$ implies simplicity of the crossed product $\cont(\fb G) \rtimes_r G$.  Theorem \ref{thm:main} is right away derived from this fact.
\begin{thmstar}[Theorem \ref{thm:relating-simplicity}]
  Let $G$ be a unimodular locally compact second countable group.  If $\Cstarred(G)$ is simple, then $\cont(\fb G) \rtimes_r G$ is simple.
\end{thmstar}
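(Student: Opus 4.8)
The plan is to reduce simplicity of $\cont(\fb G) \rtimes_r G$ to the \emph{intersection property}: every nonzero closed two-sided ideal $J \subseteq \cont(\fb G) \rtimes_r G$ satisfies $J \cap \cont(\fb G) \neq 0$. Granting this, the minimality of the Furstenberg boundary action $G \grpaction{} \fb G$ (every boundary is minimal) forces $J \cap \cont(\fb G)$, being a nonzero $G$-invariant ideal of $\cont(\fb G)$, to equal all of $\cont(\fb G)$; since $\fb G$ is compact, $\cont(\fb G)$ is unital, so $J$ contains the unit and $J = \cont(\fb G) \rtimes_r G$. Thus the entire difficulty is concentrated in the intersection property, and this is exactly the point at which simplicity of $\Cstarred(G)$ must enter.

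To establish the intersection property I would combine two ingredients. The first is the faithful canonical conditional expectation $E \colon \cont(\fb G) \rtimes_r G \to \cont(\fb G)$ coming from the reduced crossed product: given $0 \neq a \in J$, faithfulness yields $E(a^*a) \neq 0$ in $\cont(\fb G)$, so every ideal produces a nonzero positive function on $\fb G$ \emph{after applying $E$}, and the task is to promote this to a genuine element of $J \cap \cont(\fb G)$. The second ingredient is the strong proximality of $\fb G$: for every probability measure $\mu$ on $\fb G$ there is a net $(g_i)$ in $G$ with $g_i \cdot \mu$ converging weak$^*$ to a Dirac mass. The strategy is a Powers-type averaging: conjugating $a^*a$ by the unitaries $u_{g_i}$ (which keeps the result inside the ideal $J$) and averaging against Haar measure, one uses strong proximality to concentrate $E(u_{g_i}\, a^*a\, u_{g_i}^*)$ near a point evaluation, so that in a suitable limit $J$ captures a nonzero element of $\cont(\fb G)$ itself.

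The role of the hypothesis is to control the group directions of the crossed product during this averaging, and here unimodularity and $\Cstar$-simplicity of $G$ are used together. Unimodularity furnishes the canonical semifinite trace $\tau$ on $\Cstarred(G)$, hence the left-regular structure needed to make the Haar-averages behave like bona fide convex combinations of conjugates. Simplicity of $\Cstarred(G)$ guarantees that the copy of $\Cstarred(G)$ generated by the unitaries $u_g$ inside $\cont(\fb G) \rtimes_r G$ survives faithfully in the quotient $(\cont(\fb G) \rtimes_r G)/J$: were $J \cap \cont(\fb G)$ to vanish, the quotient would restrict to a faithful representation of $\Cstarred(G)$, and the boundary-averaging above would nonetheless force a nonzero coefficient into $J$, a contradiction. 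Equivalently, one expects to extend the Kalantar--Kennedy / Breuillard--Kalantar--Kennedy--Ozawa dictionary to this setting, deducing from simplicity of $\Cstarred(G)$ that $G$ acts topologically freely on $\fb G$, and then invoking a locally compact analogue of the Archbold--Spielberg criterion that a minimal, topologically free action has simple reduced crossed product.

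The hard part will be the passage from discrete to non-discrete groups in the averaging argument. In the discrete case the $u_g$ are honest elements and Powers averaging is a finite convex manipulation; here the averages are Haar integrals over a non-discrete group and the relevant trace on $\Cstarred(G)$ is only semifinite rather than finite, so one must control (a) convergence and integrability of the conjugation integrals, (b) the interaction of the semifinite trace with the non-unital pieces of the algebra, and (c) the approximation of the Dirac-concentration on $\fb G$ by genuine elements of $\cont(\fb G) \rtimes_r G$ rather than merely by weak$^*$ limits of states. I expect the semifinite-trace bookkeeping in (b), together with the locally compact rigidity of the injective envelope $\cont(\fb G) = I_G(\CC)$ needed for (c), to be the principal technical obstacle.
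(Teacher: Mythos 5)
Your opening reduction (nonzero ideal $J$ $\Rightarrow$ nonzero $G$-invariant ideal of $\cont(\fb G)$ $\Rightarrow$ everything, by minimality) is the right general shape, but two of your three ingredients do not exist for non-discrete $G$, and the third is not an argument. There is no bounded conditional expectation $E\colon \cont(\fb G)\rtimes_r G \to \cont(\fb G)$ when $G$ is not discrete: already for $X$ a point this would be a state on $\Cstarred(G)$ given by evaluation at the identity, i.e.\ the Plancherel weight, which is unbounded. Relatedly, $\cont(\fb G)$ sits only in the multiplier algebra of $\cont(\fb G)\rtimes_r G$, not in the crossed product itself, so ``$J$ contains the unit of $\cont(\fb G)$, hence $J$ is everything'' needs reformulation. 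The paper circumvents both problems by first noting that simplicity of $\Cstarred(G)$ forces $G$ to be totally disconnected \cite{raum15-powers}, and then working in the corners $p_K(\cont(\fb G)\rtimes_r G)p_K$ for compact open $K\leq G$, where there \emph{is} a faithful conditional expectation onto $\cont(K\bs\fb G)$ and where $\cont(K\bs\fb G)p_K$ genuinely lies inside the crossed product.

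More seriously, the step carrying all the weight in your plan --- the Powers-type averaging that concentrates $E(u_{g_i}a^*au_{g_i}^*)$ at a point mass while remaining in $J$ --- is precisely the open difficulty, not a tool you may invoke. Even for discrete groups, the implication ``$\Cstarred(G)$ simple $\Rightarrow$ Powers averaging'' is a later theorem of Haagerup and Kennedy whose proofs themselves run through the boundary machinery; and your fallback (deduce topological freeness of $G\grpaction{}\fb G$ from $\Cstar$-simplicity and apply an Archbold--Spielberg criterion) is exactly the part of the discrete dictionary that is \emph{not} available for locally compact groups. The paper's actual mechanism is different and you would need its three ingredients: (1) a Hahn-Banach-type extension of the Plancherel weight of $\Cstarred(G)$ to a densely defined weight on any representation image in which $\Cstarred(G)$ survives faithfully (Lemma \ref{lem:hahn-banach}; this is where unimodularity enters, making the weight sufficiently central on the corners); (2) the resulting $G$-equivariant unbounded cp map $x\mapsto (g\mapsto\psi(u_g^*xu_g))$ into $\contblu(G)$, composed with the expectation onto $\cont(\fb G)$; and (3) a rigidity theorem for the quotient hypergroup $K\bs G/K$ acting on $K\bs\fb G$ (Theorem \ref{thm:hypergroup-furstenberg-boundary}), which identifies the compressed map with the canonical faithful expectation $p_K(\cont(\fb G)\rtimes_r G)p_K\to\cont(K\bs\fb G)$ and hence gives faithfulness of $\pi$ on every corner. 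None of these appears in your outline, and without them the plan does not close.
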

This theorem's proof is inspired by methods employed in \cite{kalantarkennedy14-boudaries} and \cite{ozawa14-boundaries-lecture-notes} to characterise discrete \Cstar-simple groups.  They are combined with strategies used in \cite{raum15-powers} developed to study group \Cstar-algebras of totally disconnected groups.  The main novelties in this article are a version of the Hahn-Banach extension theorem for specific weights and a rigidity result for compact spaces with an action of a discrete hypergroup.

\subsection*{Acknowledgements}

We want to thank Matthew Kennedy for helpful conversations on \Cstar-simplicity of locally compact groups and for useful remarks on a draft version of this article.  Further thanks is due to Emmanuel Breuillard for inspiring discussions and asking whether the statement of the main theorem could be true.  We finally we want to thank Pierre-Emmanuel Caprace for pointing out to us the classification of CAT(0)-groups containing a cocompact amenable closed subgroup.

\section{Preliminaries}
\label{sec:preliminaries}

\subsection{Totally disconnected groups}
\label{sec:td-groups}

Let $G$ be a topological group.  We denote the connected component of the identity in $G$ by $G^0$.  We call $G$ totally disconnected if $G^0 = \{e\}$.  In this case, $G$ admits a neighbourhood basis of the identity consisting of compact open subgroups.  For any topological group $G$, there is a short exact sequence $1 \ra G^0 \ra G \ra G/G^0 \ra 1$, showing that $G$ is totally disconnected-by-connected.

If $G$ is locally compact, then there is an up to scaling unique left-invariant Borel measure $\mu$ on $G$.  Such a measure is called Haar measure of $G$.  The modular function of $G$ is defined by the formula $\mu( A g) = \Delta(g) \mu(A)$ for all $A \subset G$ measurable and $g \in G$.  It is a well-defined homomorphism $\Delta: G \ra \RR_{> 0}$.

\subsection{Group C*-algebras}
\label{sec:group-cstar-algebras}

\textit{The reduced group \Cstar-algebra}. If $G$ is a locally compact group, then after choice of a Haar measure, $\contc(G)$ becomes a *-algebra equipped with the convolution product $x * y (h) = \int_G x(g) y(g^{-1} h ) \rmd g$.  The left-regular representation $G \ra \cU(\Ltwo(G))$ induces a *-representation of $\contc(G)$.  We call the norm closure of $\contc(G)$ inside $\bo(\Ltwo(G))$ the reduced group \Cstar-algebra of $G$.  The canonical unitaries in $\rM(\Cstarred(G))$ are denoted by $u_g := \lambda(g)$.

\textit{Averaging projections}.  If $G$ is a totally disconnected group with Haar measure $\mu$, the averaging projections $p_K := \frac{1}{\mu(K)} \int_K u_k \rmd \mu(k)$ lie in $\Cstarred(G)$.  They form an approximate unit consisting of projections.  In particular, the *-algebra $\bigcup_{K \leq G \text{ compact open}} p_K \Cstarred(G) p_K$ is dense in $\Cstarred(G)$.

\textit{Modular automorphism group}.  If $\Delta$ is the modular function of $G$, then $(\rU_t\xi)(g) := \Delta(g)^{it} \xi(g)$ defines a unitary $U_t$ on $\Ltwo(G)$.   The one-parameter automorphism group $(\Ad U_t)_t$ restricts to $\Cstarred(G)$, where it is denoted by $(\sigma_t)_t$.

\textit{Weights}.  A weight on a \Cstar-algebra $A$ is a function $\vphi:A^+ \ra \RR_{\geq 0} \cup \{\infty\}$ satisfying
$\vphi(\lambda(x + y)) = \lambda(\vphi(x) + \vphi(y))$ for all $\lambda > 0$ and $x, y \in A^+$.  Every choice of a Haar measure $\mu$ on a locally compact group $G$ defines a so called Plancherel weight on $\Cstarred(G)$.  It is densely defined and lower semi-continuous for the norm topology.  If $G$ is unimodular, every Plancherel weight $\vphi$ is tracial in the sense that $\vphi(x^*x) = \vphi(xx^*)$ for all $x \in \Cstarred(G)$.   In case $G$ is totally disconnected, Plancherel weights can be characterised as lower-semicontinuous KMS-weights $\vphi$ with respect to $(\sigma_t)_t$ satisfying the formula
\begin{equation*}
  \vphi(p_K u_g) = \mathbb 1_K(g) \frac{1}{\mu(K)}
\end{equation*}
for every compact open subgroup $K \leq G$ and every $g \in G$.  In particular, if $K \leq G$ is a compact open subgroup then $\vphi(p_K \cdot ) = \vphi( \cdot p_K) = \vphi(p_K \cdot p_K)$ defines a positive linear functional on the compression $p_K \Cstarred(G) p_K$.  We obtain a well-defined positive functional on $\bigcup_K p_K \Cstarred(G) p_K$, which is tracial if $G$ is unimodular.

\textit{Conditional expectation}. If $G \grpaction{} X$ is a locally compact group acting by homeomorphisms on a compact space $X$ and $K \leq G$ is a compact open subgroup, we obtain a canonical conditional expectation $p_K( \cont(X) \rtimes_r G )p_K \ra \cont(K \bs X)$ which restricts to the normalised Plancherel weight on $p_K \Cstarred(G) p_K$ and to the canonical isomophism $\cont(K \bs X) p_K \cong \cont(K \bs X)$.  This conditional expectation is faithful.

\subsection{Unbounded completely positive maps}
\label{sec:unbounded-cp-maps}

We refer the reader to the work of Evans \cite{evans75} for details on unbounded completely positive maps.

\textit{Facial subalgebra}.  Given a \Cstar-algebra $A$ a subalgebra $\cA \subset A$ is called facial if $\cA$ is the span of $\cA \cap A^+$.

\textit{Unbounded completely positive map}. Fixing a facial subalgebra $\cA \subset A$, an unbounded completely positive map into a \Cstar-algebra $B$ and with domain $\cA$ is a linear map $\Phi: \cA \ra B$ such that $\Phi_n: \cA \ot \rM_n(\CC) \ra B \ot \rM_n(\CC)$ is positive for all $n \in \NN^\times$.

\textit{Restrictions of unbounded cp maps}.  Assume that there a densely defined unbounded completely positive map $\vphi: \cA \ra B$ on $A$ and some element in $a \in \cA^+$.  Then $\vphi|_{a \cA a}$ extends uniquely to a well-defined completely positive map on $a A a$.

\section{Non-simplicity of the crossed product}
\label{sec:non-simplicity-crossed-product}

\subsection{The Furstenberg boundary}
\label{sec:furstenberg-boundary}

Let $G$ be a locally compact group.  A compact $G$-space $X$ is called
\begin{itemize}
\item \emph{minimal} if $Gx \subset X$ is dense for all $x \in X$;
\item \emph{strongly proximal} if for every $\mu \in \cP(X)$ the set $\ol{G\mu}$ contains a point-mass.
\end{itemize}
A minimal and strongly proximal compact $G$-space is called a $G$-boundary.

Furstenberg proved that there exists a maximal $G$-boundary for every locally compact group $G$.  This is the \emph{Furstenberg boundary} of $G$, denoted by $\fb G$ \cite{furstenberg72}.  The Furstenberg boundary is rigid in the sense that every $G$-equivariant unital completely positive map ($G$-ucp map for short) $\cont(\fb G) \ra \cont(\fb G)$ is the identity map.

In \cite{kalantarkennedy14-boudaries}, Kalantar-Kennedy showed that the Furstenberg boundary is $G$-injective in the sense that $\cont(\fb G)$ is an injective object in the category of $G$-operator systems.  Further, every choice of a probability measure on $\fb G$ defines a Poisson maps embedding $\cont(\fb G) \ra \contblu(G)$ in a $G$-equivariant way.  We obtain a $G$-equivariant conditional expectation $\contblu(G) \ra \cont(\fb G)$ for each such embedding.  For the  rest of the article, we fix such an embedding $\cont(\fb G) \ra \contblu(G)$.

Ozawa proved the following theorem, identifying the Furstenberg boundary of $G$ with a homogeneous space in caes $G$ admits a cocompact amenable closed subgroup.
\begin{theorem}[Ozawa {\cite[Proposition 10]{ozawa14-boundaries-lecture-notes}}]
  \label{thm:homogeneous-furstenberg-boundary}
  Let $G$ be a locally compact group containing a closed cocompact relatively amenable subgroup.  Then $\partial_F G \cong G/H$ for every maximal closed relatively amenable subgroup $H \leq G$.
\end{theorem}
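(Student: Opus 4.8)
The plan is to realise $\fb G$ as a single orbit of point masses, using the three structural features of a boundary in turn: relative amenability produces an invariant probability measure, strong proximality upgrades it to a point mass once cocompactness is brought in, and minimality forces the resulting orbit to exhaust $\fb G$; maximality of $H$ then identifies the point stabiliser with $H$. To begin, I would fix a cocompact relatively amenable closed subgroup and use relative amenability to produce an invariant probability measure $\nu \in \cP(\fb G)$. When the subgroup is amenable this is the classical fixed-point property applied to the affine action on the weak-$*$ compact convex set $\cP(\fb G)$; in the relatively amenable case it follows from the defining fixed-point property of relative amenability applied to the convex compact $G$-space $\cP(\fb G)$.

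The crux, and the only place the cocompactness hypothesis is genuinely used, is to promote $\nu$ to a point mass. Writing $H$ for the cocompact relatively amenable subgroup fixing $\nu$, the orbit map $g \mapsto g_*\nu$ descends to a continuous $G$-equivariant map $\overline{\phi}\colon G/H \to \cP(\fb G)$. Since $H$ is cocompact, $G/H$ is compact, so the orbit $G\nu = \overline{\phi}(G/H)$ is a compact, hence closed, $G$-invariant subset of $\cP(\fb G)$. Strong proximality of $\fb G$ guarantees that $\overline{G\nu} = G\nu$ contains a point mass $\delta_b$; as $\delta_b \in G\nu$ we have $\delta_b = g_*\nu$ for some $g$, whence $\nu = \delta_{g^{-1} b}$ is itself a point mass. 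Thus $\nu = \delta_b$ for a point $b \in \fb G$ whose stabiliser contains $H$.

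The image of $\overline{\phi}$ is then the set of point masses supported on the orbit $Gb$, so $Gb$ is a compact, hence closed, $G$-invariant subset of $\fb G$; minimality of $\fb G$ yields $Gb = \fb G$. The orbit map now gives a continuous $G$-equivariant bijection from the compact space $G/\Stab_G(b)$ (a quotient of $G/H$) onto the Hausdorff space $\fb G$, hence a homeomorphism, so $\fb G \cong G/\Stab_G(b)$ as $G$-spaces and $\fb G$ is homogeneous. To match the stabiliser with a maximal relatively amenable subgroup, I would invoke that the action $G \grpaction{} \fb G$ is amenable (Kalantar--Kennedy), so that the point stabiliser $\Stab_G(b)$ is relatively amenable; since it is cocompact and $\fb G = G/\Stab_G(b)$ realises the maximal boundary, $\Stab_G(b)$ is a maximal relatively amenable closed subgroup. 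For an arbitrary maximal relatively amenable $H$ one argues conversely that $H$ fixes a measure, hence by the same point-mass mechanism is a point stabiliser of the now-homogeneous $\fb G$, and the rigidity of $\fb G$ forces all such stabilisers to be conjugate; maximality then yields $\fb G \cong G/H$.

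I expect the main obstacle to lie in this final identification rather than in the dynamical core. Producing the point mass from cocompactness and strong proximality is short and robust. The delicate points are, first, that relative amenability (as opposed to honest amenability) must genuinely deliver an invariant measure, which requires the correct fixed-point characterisation; and second, that every maximal relatively amenable subgroup is cocompact and arises as a point stabiliser of $\fb G$, which rests on amenability of the boundary action together with the rigidity of $\fb G$ recorded above. These two inputs, not the orbit argument, carry the weight of the proof.
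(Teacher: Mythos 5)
The paper itself offers no proof of this statement---it is quoted verbatim from Ozawa's lecture notes---so your attempt can only be measured against the standard argument. The dynamical core of your proposal is exactly that argument and is correct: relative amenability of the cocompact subgroup $H$ yields a fixed measure $\nu\in\cP(\fb G)$, cocompactness makes the orbit $G\nu$ compact and hence closed, strong proximality then forces $\nu$ to be a point mass $\delta_b$, and minimality plus the compact-to-Hausdorff bijection $G/\Stab_G(b)\to Gb$ gives $\fb G\cong G/\Stab_G(b)$ with $H\leq\Stab_G(b)$. One small repair: to see that $\Stab_G(b)$ is relatively amenable, rather than appealing to amenability of the boundary action, compose the fixed $G$-equivariant conditional expectation $\contblu(G)\to\cont(\fb G)$ with evaluation at $b$; this produces a $\Stab_G(b)$-invariant mean on $\contblu(G)$, which is Caprace--Monod's characterisation of relative amenability.

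The genuine gap is in your last step, precisely where you locate the weight of the proof. For an \emph{arbitrary} maximal closed relatively amenable subgroup $H$ you claim that the fixed measure is "by the same point-mass mechanism" a point mass. But that mechanism used cocompactness of the subgroup to conclude that the orbit of the fixed measure is closed; without cocompactness, $\ol{G\mu_0}$ contains a point mass while $\mu_0$ itself need not be one, and $H$ need not stabilise any point of $\fb G$. This is not a repairable omission for the statement read literally: in $G=\SL{2}{\RR}$ the subgroup $\mathrm{SO}(2)$ is a maximal closed subgroup, hence a maximal closed (relatively) amenable subgroup, it fixes the rotation-invariant measure on $\fb G=G/P\cong\RR P^1$ but acts transitively there, and $G/\mathrm{SO}(2)$ is the hyperbolic plane, not $\fb G$. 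The conclusion therefore only holds for maximal closed relatively amenable subgroups that are themselves cocompact---for instance any maximal one containing the given cocompact $H$, since every closed subgroup containing a cocompact subgroup is cocompact. Under that reading your own orbit argument applies verbatim and, combined with relative amenability of point stabilisers and maximality, closes the proof; and this weaker reading is all the paper uses, since Corollary~\ref{cor:crossed-product-non-simple} only needs $\fb G\cong G/H$ for \emph{some} maximal relatively amenable closed subgroup.
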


\subsection{Imprimitivity}
\label{sec:imprimitivity}

Green's imprimitivity theorem describes a stable isomorphism between the crossed product with a homogeneous space and the group \Cstar-algebra of a point stabiliser.  Thanks to Theorem \ref{thm:homogeneous-furstenberg-boundary} it applies to the crossed product by the Furstenberg boundary in case there is a cocompact amenable closed subgroup.
\begin{theorem}[Green {[Corollary 2.10]\cite{green78}}]
  \label{thm:imprimivitivity}
  Let $G$ be a locally compact group and $H \leq G$ be a closed subgroup.  Then $\conto(G/H) \rtimes_r G \cong \Cstarred(H) \ot \cK(\Ltwo(G/H))$.
\end{theorem}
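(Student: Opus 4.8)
The plan is to produce a concrete spatial isomorphism that realises both sides as operators on $\Ltwo(G)$. First I would fix a Borel section $s : G/H \ra G$ of the quotient map $q : G \ra G/H$ and a quasi-invariant measure on $G/H$ whose $\rho$-function interpolates the modular functions $\Delta_G$ and $\Delta_H$. Writing each $g \in G$ uniquely as $g = s(q(g))\,\beta(g)$ with $\beta(g) \in H$ gives a Borel isomorphism $G \cong G/H \times H$, and after absorbing the Radon--Nikodym cocycle into a multiplier this yields a unitary $W : \Ltwo(G) \ra \Ltwo(G/H) \ot \Ltwo(H)$. The left-regular representation $\lambda$ of $G$ together with the representation of $\conto(G/H)$ by multiplication operators on $\Ltwo(G)$ (functions pulled back along $q$) form a covariant pair generating $\conto(G/H) \rtimes_r G$ on $\Ltwo(G)$, and the goal is to show that $W$ conjugates this image onto $\Cstarred(H) \ot \cK(\Ltwo(G/H))$.

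Next I would compute the images of the two families of generators under $W$. Multiplication by $f \in \conto(G/H)$ becomes $M_f \ot 1$, acting only on the base leg $\Ltwo(G/H)$. For the translations, the identity $g^{-1} s(\dot x) = s(g^{-1}\dot x)\, c(g,\dot x)$ defines a Borel cocycle $c : G \times G/H \ra H$, and a short computation shows that $W \lambda(g) W^*$ is the decomposable operator translating the base point $\dot x$ by $g$ while acting on the fibre $\Ltwo(H)$ by the left translation $\lambda_H(c(g,\dot x))$, up to the cocycle normalisation. Consequently the entire image lands inside $\bo(\Ltwo(G/H)) \ot \Cstarred(H)$, and it remains to identify the generated $\Cstar$-subalgebra precisely.

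To finish I would show the generated algebra equals $\cK(\Ltwo(G/H)) \ot \Cstarred(H)$. Slicing off the fibre variable, the multiplication operators $\conto(G/H)$ together with the base parts of the translations act irreducibly on $\Ltwo(G/H)$ (the action $G \grpaction{} G/H$ is transitive, hence minimal) and produce a nonzero compact operator, so by irreducibility they generate all of $\cK(\Ltwo(G/H))$; meanwhile the fibre translations $\lambda_H(c(g,\dot x))$ sweep out a dense subset of $\Cstarred(H)$ as $g$ ranges over $G$ at a fixed generic $\dot x$, and matrix units from $\cK(\Ltwo(G/H))$ separate the base and fibre variables to give all elementary tensors. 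Injectivity is automatic, since the representation on $\Ltwo(G)$ is unitarily equivalent to the one defining the reduced crossed product, hence faithful. The main obstacle is twofold: the bookkeeping of the $\rho$-function and modular corrections in the non-unimodular case, needed to make $W$ genuinely unitary and $c$ a genuine cocycle, and the surjectivity step of proving that the generated algebra exhausts $\cK(\Ltwo(G/H)) \ot \Cstarred(H)$ rather than a proper subalgebra, where transitivity of the action and a careful density argument are essential. I would also record that this isomorphism is the stabilised incarnation of Rieffel's imprimitivity bimodule, with $\Ltwo(G)$ regarded as a $\conto(G/H)\rtimes_r G$--$\Cstarred(H)$ equivalence bimodule.
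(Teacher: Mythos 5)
The paper offers no proof of this statement: it is imported verbatim from Green's article as Corollary 2.10, so there is nothing internal to compare your argument against. Your spatial approach --- trivialising $G \ra G/H$ by a Borel section, building a unitary $W : \Ltwo(G) \ra \Ltwo(G/H) \ot \Ltwo(H)$ after correcting by the $\rho$-function, and conjugating the covariant pair --- is the standard route to the transitive case of the imprimitivity theorem, and your computation of $W\lambda(g)W^*$ as a base translation twisted fibrewise by the cocycle $c(g,\dot x)$ is correct.

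Two steps need more than you give them. First, the surjectivity argument as written does not go through. The elements of the crossed product are the smeared products $\int_G f(g)\,F\,\lambda(g)\,\rmd g$ with $F \in \conto(G/H)$, not the individual multipliers $M_f$ and $\lambda(g)$, and in the image of such an element the base leg and the fibre leg are entangled through $c(g,\dot x)$. You therefore cannot first generate $\cK(\Ltwo(G/H))$ ``on the base'' by an irreducibility argument and then sweep out $\Cstarred(H)$ ``on the fibre at a fixed $\dot x$'': neither family of operators lies in the algebra you are trying to identify, only in its multiplier algebra. The clean way to finish is to compute directly that $W$ carries the image of $\contc(G \times G/H)$ to the integral operators on $\Ltwo(G/H \times H)$ whose kernels have the form $k((\dot x,h),(\dot y,k)) = \tilde F(\dot x,\dot y,hk^{-1})$, and to observe that the change of variables $(g,\dot x) \mapsto (\dot x,\dot y,m)$ is a Borel isomorphism, so that these kernels exhaust a dense subspace of $\cK(\Ltwo(G/H)) \ot \Cstarred(H)$. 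Second, ``injectivity is automatic'' is too quick: your representation on $\Ltwo(G)$ is the regular representation induced from evaluation at the base point $eH$, which is \emph{not} a faithful representation of $\conto(G/H)$, so it is not literally ``the one defining the reduced crossed product''. Faithfulness requires the extra observation that, by transitivity, right translations implement unitary equivalences between the representations induced from the various point evaluations, so this single one is already weakly equivalent to the defining faithful regular representation. Both points are standard, but they are exactly where the content of Green's corollary lies.
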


\begin{corollary}
  \label{cor:crossed-product-non-simple}
  Let $G$ be a non-trivial locally compact group containing a closed cocompact amenable subgroup.  Then $\cont(\partial_F G) \rtimes_r G$ is not simple.
\end{corollary}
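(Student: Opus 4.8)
The plan is to realise $\cont(\partial_F G) \rtimes_r G$ as a stabilised group $\Cstar$-algebra and to read off non-simplicity from a point stabiliser. First I would pass from the given cocompact amenable closed subgroup, which is in particular relatively amenable, to a maximal closed relatively amenable subgroup $H \leq G$ containing it. Theorem \ref{thm:homogeneous-furstenberg-boundary} then provides a $G$-equivariant homeomorphism $\partial_F G \cong G/H$, and since $\partial_F G$ is compact the same holds for $G/H$, so that $H$ is cocompact. I would also note in passing that $H$ cannot be trivial: otherwise $\partial_F G \cong G$ would be compact, hence amenable with trivial Furstenberg boundary, forcing $G = \{e\}$ and contradicting the hypothesis that $G$ is non-trivial.

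Next I would feed this homogeneous description into Green's imprimitivity theorem. Because $G/H$ is compact, $\conto(G/H) = \cont(G/H)$, and the equivariant homeomorphism induces an isomorphism $\cont(\partial_F G) \rtimes_r G \cong \conto(G/H) \rtimes_r G$; Theorem \ref{thm:imprimivitivity} then identifies the latter with $\Cstarred(H) \ot \cK(\Ltwo(G/H))$. Since the closed two-sided ideals of $A \ot \cK(\mathcal{H})$ are precisely those of the form $I \ot \cK(\mathcal{H})$ for $I$ a closed two-sided ideal of $A$, this algebra is simple exactly when $\Cstarred(H)$ is. The whole problem thus reduces to showing that $\Cstarred(H)$ is not simple.

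For this I would use that $H$ is a non-trivial \emph{amenable} group: for an amenable locally compact group the trivial representation is weakly contained in the left-regular representation, so the augmentation defines a unital $*$-homomorphism $\Cstarred(H) \ra \CC$, whose kernel is a proper non-zero closed two-sided ideal. The main obstacle — indeed the only non-formal point — is to justify that the stabiliser $H$ is genuinely amenable and not merely relatively amenable. Here I would invoke that a cocompact maximal relatively amenable closed subgroup (equivalently, in the transitive case, the stabiliser of a point of the Furstenberg boundary) is amenable; in the examples motivating the paper this is the statement that stabilisers of ends of trees and parabolic subgroups of semisimple groups are amenable. Once $H$ is known to be amenable and non-trivial, the augmentation character above completes the argument and shows that $\cont(\partial_F G) \rtimes_r G$ is not simple.
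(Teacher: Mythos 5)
Your overall route is the paper's: identify $\fb G$ with $G/H$ for a maximal closed relatively amenable subgroup $H$ via Theorem \ref{thm:homogeneous-furstenberg-boundary}, apply Green's imprimitivity theorem to obtain $\cont(\fb G) \rtimes_r G \cong \Cstarred(H) \ot \cK(\Ltwo(G/H))$, and exhibit the augmentation ideal of $\Cstarred(H)$. The problem is the step you yourself single out as ``the only non-formal point'': you assert that the cocompact maximal relatively amenable subgroup $H$ is genuinely amenable, but you offer no argument beyond an appeal to the motivating examples (end stabilisers of trees, parabolic subgroups). That is not a proof, and no soft argument is available here: a closed subgroup containing a cocompact amenable closed subgroup need not itself be amenable ($G$ is such a subgroup of $G$), so neither the cocompactness of $H$ nor the presence of the original amenable subgroup inside $H$ yields what you need. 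As stated, your proof has a genuine gap at its load-bearing step.

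The paper closes this gap with two external inputs. First, a locally compact group with a cocompact amenable closed subgroup is exact, by Skandalis' observation recorded in Kirchberg--Wassermann. Second, by a theorem of Caprace and Monod, every closed relatively amenable subgroup of an exact locally compact group is amenable. Combining these, $H$ is amenable, and the remainder of your argument --- the augmentation character $\Cstarred(H) \ra \CC$ coming from weak containment of the trivial representation in the regular one, and the correspondence between ideals of $A \ot \cK$ and ideals of $A$ --- goes through. Your side remark that $H$ is non-trivial (so that $\Cstarred(H) \neq \CC$) is a correct observation that the paper leaves implicit, but it does not compensate for the missing amenability step.
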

\begin{proof}
  Since $G$ contains a cocompact amenable subgroup, it is exact by Skandalis' observation [Theorem in Section 7]\cite{kirchbergwassermann99}.  By Theorem \ref{thm:homogeneous-furstenberg-boundary} the Furstenberg boundary of $G$ is a homogeneous space $\fb G \cong G / H$ for some maximal relatively amenable closed subgroup $H \leq G$.  Caprace and Monod's \cite[Theorem 2]{capracemonod14} says that every closed relatively amenable subgroup of an exact group is amenable.  So $H$ is amenable.  In particular, $\Cstar(H)$ is not simple, since it contains the augmentation ideal induced from the trivial representation of $H$.  We can now apply Theorem \ref{thm:imprimivitivity} to conclude that 
  \begin{equation*}
    \cont(\fb G) \rtimes G \cong \cont(G/H) \rtimes G \cong \Cstar(H) \ot \cK(\Ltwo(G/H))
  \end{equation*}
  is not simple.  
\end{proof}

\section{The Furstenberg boundary of a quotient hypergroup}
\label{sec:furstenberg-boundary-quotient-hypergroup}

In this section we are going to study aspects of the Fustenberg boundary in the context of discrete hypergroups arising as double cosets of a an inclusion $K \leq G$ of a compact open subgroup into a locally compact group.  In this setting, Theorem \ref{thm:hypergroup-furstenberg-boundary} is an analogue of the rigidity of the Furstenberg boundary.

\subsection{Discrete hypergroups acting on operator systems}
\label{sec:actions-and-operator-systems}

\begin{definition}
  A discrete hypergroup is a discrete set $\cG$ with a unit element $e \in \cG$, a generalised inversion $\cG \ni g \mapsto \ol{g} \in \cG$ and an associative convolution product $*$ on probability measures on $\cG$ such that
  \begin{itemize}
  \item $g * e = e * g = g$ for all $g \in \cG$,
  \item $\ol{\ol{g}} = g$ for all $g \in \cG$,
  \item $\ol{g * h} = \ol{h} * \ol{g}$ (where we consider the linear extension of $g \mapsto \ol{g}$ to $\cP(\cG)$),
  \item $\supp (g * h) \ni e$ if and only if $h = \ol{g}$,
  \item $\supp(\mu * \nu)$ is finite for all finitely supported probability measures $\mu, \nu$ on $\cG$.
  \end{itemize}
\end{definition}
To simplify notation, we identify elements of a discrete hypergroup with the corresponding point masses.

\begin{example}[Quotient hypergroups]
 \label{ex:quotient-hypergroups}
 Our main example of discrete hypergroups arises from double cosets of an inclusion $K \leq G$ of a compact open subgroup into a locally compact group.  In this case $\cG = K \bs G / K$ becomes a discrete hypergroup when equipped with the structure
  \begin{itemize}
  \item $e_\cG = K$,
  \item $\ol{K g K} = K g^{-1} K$, and
  \item $(K g K) * (K h K) = \frac{1}{[K : K \cap  h K h^{-1}]} \sum_{l \in K / K \cap h K h^{-1}} \delta_{K g l hK}$. 
  \end{itemize}
\end{example}

\begin{definition}[Hypergroup operator systems]
Let $\cG$ be a discrete hypergroup.  A $\cG$-operator system is an operator system $S$ with a map $\alpha: \cG \ra \mathrm{UCP}(S)$ into the set of completely positive unital maps on $S$, such that
  \begin{itemize}
  \item $\alpha_e = \id_S$, and
  \item $\alpha_g \circ \alpha_h = \int_K \alpha_k \rmd (\delta_g * \delta_h)(k)$.
  \end{itemize}
We usually write $g x$ for $\alpha_g(x)$, if $g \in \cG$ and $x \in S$.

A map between two $\cG$ operator systems $\vphi: S \ra T$ is $\cG$-equivariant, if $\vphi(g x) = g \vphi(x)$ for all $g \in \cG$ and all $x \in S$.

If $X$ is a compact space, then a $\cG$-action on $X$ is per definition a $\cG$-operator system structure on $\cont(X)$.
\end{definition}

\begin{example}[Fixed point systems]
  Let $K \leq G$ be a compact open subgroup of a locally compact group and denote by $\cG = K \bs G /K$ the quotient hypergroup.  If $S$ is a $G$-operator system, then the fixed point space $S^K$, which is an operator subsystem, inherits the structure of a $\cG$-operator system by
  \begin{equation*}
    (KgK)x = \int_K k g x \, \rmd k
    \eqstop
  \end{equation*}
  If $\Phi: S \ra R$ is a $G$-ucp map between $G$-operator systems, then the restriction $\Phi: S^K \ra R^K$ is well-defined and $\cG$-equivariant.
\end{example}

\subsection{Injective operator systems over quotient hypergroups}
\label{sec:injective-operator-systems}

One of the main insights of \cite{kalantarkennedy14-boudaries} was that the Furstenberg boundary of a group $G$ defines an injective $G$-operator system.  A parallel result holds true for quotient hypergroups.  We follow Ozawa's proof from \cite[Theorem 6]{ozawa14-boundaries-lecture-notes} of Kalantar-Kennedy's result.

\begin{proposition}
  Let $G$ be a totally disconnected group and $K \leq G$ a compact open subgroup.  Denote by $\cG = K \bs G /K$ the quotient hypergroup.  Then $\cont(K \bs \fb G)$ is a $\cG$-injective operator system.
\end{proposition}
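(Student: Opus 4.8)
The plan is to follow the template of Ozawa's proof that $\cont(\fb G)$ is $G$-injective, replacing the role of the group by the quotient hypergroup $\cG = K \bs G / K$ and reducing, wherever possible, to the already-fixed $G$-equivariant Poisson data by means of the $K$-fixed point construction. Here $\cG$-injectivity of $\cont(K \bs \fb G)$ means the extension property: given an inclusion $T_1 \subseteq T_2$ of $\cG$-operator systems and a $\cG$-equivariant ucp map $\phi \colon T_1 \to \cont(K \bs \fb G)$, there is a $\cG$-equivariant ucp extension $\wt\phi \colon T_2 \to \cont(K \bs \fb G)$.

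First I would record the two structural inputs. On the one hand, $\cont(\fb G)$ is injective as an ordinary operator system (its spectrum is extremally disconnected), and the $K$-averaging map $E_K(f) = \int_K k f \, \rmd k$ is a ucp idempotent onto $\cont(\fb G)^K = \cont(K \bs \fb G)$; since a ucp retract of an injective operator system is injective, $\cont(K \bs \fb G)$ is injective as an operator system. On the other hand, applying the $K$-fixed point functor to the fixed $G$-equivariant embedding $\iota \colon \cont(\fb G) \to \contblu(G)$ and the $G$-equivariant conditional expectation $P \colon \contblu(G) \to \cont(\fb G)$ produces $\cG$-equivariant ucp maps $\iota^K \colon \cont(K \bs \fb G) \to \contblu(G)^K$ and $P^K \colon \contblu(G)^K \to \cont(K \bs \fb G)$ with $P^K \iota^K = \id$, where $\contblu(G)^K = \contblu(K \bs G)$ carries the $\cG$-action induced by left translation. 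These are the hypergroup analogues of the Poisson embedding and expectation.

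With these in hand the argument runs in two moves. Using operator-system injectivity, extend $\phi$ to a (not necessarily equivariant) ucp map $\psi_0 \colon T_2 \to \cont(K \bs \fb G)$; it then remains to average $\psi_0$ into a $\cG$-equivariant map. In the group case one sets $\Phi(x)(g) = \omega\big(\psi_0(g^{-1} x)\big)$ with $\omega$ the basepoint state $\mathrm{ev}_e \circ \iota$, and checks that $\Phi \colon T_2 \to \contblu(G)$ is $G$-equivariant ucp, that $\Phi|_{T_1} = \iota \phi$ because $\phi$ is already equivariant, and hence that $\wt\phi = P \circ \Phi$ is a $G$-equivariant ucp extension of $\phi$ by $P \iota = \id$. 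The plan is to carry out the exact analogue: build a $\cG$-equivariant ucp ``Poisson transform'' $\Phi \colon T_2 \to \contblu(K \bs G) = \contblu(G)^K$ from $\psi_0$ and the action $\alpha$, and set $\wt\phi = P^K \circ \Phi$. The identity $P^K \iota^K = \id$ together with $\Phi|_{T_1} = \iota^K \phi$ then forces $\wt\phi|_{T_1} = \phi$, while $\cG$-equivariance of $P^K$ transports $\cG$-equivariance of $\Phi$ to $\wt\phi$.

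The hard part will be the construction of $\Phi$. The group formula cannot be transcribed verbatim, since the abstract $\cG$-operator system $T_2$ carries only the double-coset action $\alpha_{KgK}$, so that neither $g^{-1} x$ for individual $g \in G$ nor $k^{-1} x$ for $k \in K$ is available; in particular the naive definition fails to be left-$K$-invariant in $g$, that is, fails to land in $\contblu(K \bs G)$. Resolving this is the technical heart: one must define $\Phi(x)$ as a function on the left coset space $K \bs G$ using only $\alpha_{KgK}$ together with integration over the compact group $K$ on the target $\cont(\fb G)$ (which, unlike $T_2$, does carry a genuine $G$-action through $\iota$), and then verify three points, namely that the resulting expression depends only on the left coset $Kg$, that it is completely positive with range in $\contblu(G)^K$, and that it intertwines the double-coset action on $T_2$ with the induced $\cG$-action on $\contblu(K \bs G)$. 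The compatibility of the hypergroup convolution $\delta_g * \delta_h$ with this $K$-averaging is precisely what makes the equivariance computation close, and I expect it to be the step requiring the most care.
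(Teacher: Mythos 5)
You correctly identify the two available ingredients (plain injectivity of $\cont(K \bs \fb G)$ and the $K$-fixed points of the Poisson data), but your outline leaves its central step --- the construction of the equivariant ``Poisson transform'' $\Phi\colon T_2 \ra \contblu(K\bs G)$ --- unexecuted, and as set up it cannot be executed. An abstract $\cG$-operator system carries only the maps $\alpha_{KgK}$ indexed by \emph{double} cosets, so any expression built from $\alpha$ and a state is automatically right-$K$-invariant as well as left-$K$-invariant: it descends to $K\bs G/K=\cG$ and never sees the finer left coset space $K\bs G$. (Your suggestion to exploit the genuine $G$-action on the target does not help, because the argument $x$ still only admits the double-coset operations.) The correct target is therefore $\linfty(\cG)$ itself, and this is exactly how the paper proceeds: it first proves that $\linfty(\cG)$ is $\cG$-injective by exhibiting a bijection between states $\vphi$ on a $\cG$-operator system $S$ and $\cG$-ucp maps $S\ra\linfty(\cG)$, via $\tilde\vphi(x)(g):=\vphi(\ol{g}x)$, so that the extension problem reduces to the Hahn--Banach theorem for states --- no preliminary non-equivariant extension and no averaging are needed. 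It then realises $\cont(K\bs\fb G)$ as a $\cG$-equivariant retract of $\linfty(\cG)$, by taking $K$-fixed points of a $G$-equivariant conditional expectation $\linfty(G/K)\ra\cont(\fb G)\subset\linfty(G/K)$ obtained from $G$-injectivity and $G$-rigidity of $\cont(\fb G)$; a retract of an injective object is injective.

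Even if you repair the target to $\linfty(\cG)$, your ``extend non-equivariantly, then average'' architecture has a second unaddressed point. For $x\in T_1$ one gets $\Phi(x)=j\circ\phi(x)$ where $j(y)(g)=\omega(\ol{g}y)$, so to conclude $\wt\phi|_{T_1}=\phi$ you must know that the $\cG$-ucp composite $P^K\circ j\colon \cont(K\bs\fb G)\ra\cont(K\bs\fb G)$ is the identity. That is either a careful compatibility check identifying $j$ with the $K$-averaged Poisson embedding belonging to $P$, or a rigidity statement for the $\cG$-action --- and in this paper $\cG$-rigidity (Theorem \ref{thm:hypergroup-rigidity}) is deduced \emph{from} $\cG$-injectivity, so invoking it here would be circular. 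Your opening observation that $\cont(K\bs\fb G)$ is injective as a plain operator system is true (its spectrum is extremally disconnected, as noted in the paper's remark on orbit spaces), but it plays no role in the paper's proof and does not by itself produce the equivariant extension.
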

\begin{proof}
  We first prove that $\linfty(\cG)$ is $\cG$-injective.  If $S$ is a $\cG$-operator system, the evaluation $\mathrm{ev}_{K}: \linfty(\cG) \ra \CC$ induces a one-to-one correspondence between $\cG$-ucp maps  $S \ra \linfty(\cG)$ and states on $S$.   Given a state $\vphi: S \ra \CC$, we define the ucp map $\tilde \vphi: S \ra \linfty(\cG)$ by $\tilde \vphi(x) (g) : = \vphi(\ol{g} x)$.  We want to show that $\tilde \vphi(hx) = h (\tilde \vphi(x))$ for all $x \in S$ and all $h \in \cG$.
  \begin{equation*}
    (\tilde \vphi(hx))(g)
    =
    \vphi(\ol{g}hx)
    =
    \vphi(\ol{\ol{h}g} x)
    =
    (\tilde \vphi(x))(\ol{h}g)
    =
    (h (\tilde \vphi(x)))(g)
    \eqstop
  \end{equation*}
  So $\tilde \vphi$ is $\cG$-equivariant.  Given a $\cG$-ucp map $\Phi: S \ra \linfty(\cG)$, we obtain a state $\mathrm{ev}_e \circ \Phi$.  Further,
  \begin{equation*}
    \widetilde{(\mathrm{ev}_e \circ \Phi)}(x)(g)
    =
    (\mathrm{ev}_e \circ \Phi)(\ol{g} x)
    =
    \Phi(\ol{g} x)(e)
    =
    \Phi(x)(g)
    \eqcomma
  \end{equation*}
  by $\cG$-equivariance.  Now the Hahn-Banach theorem implies $\cG$-injectivity of $\linfty(\cG)$.

  Consider the composition $\cont(\fb G) \ra \contblu(G) \ra \linfty(G / K)$, which is a $G$-ucp map.  By $G$-rigidity of $\cont(\fb G)$ it follows injective.  So $\contblu(G) \ra \cont(\fb G) \hra \linfty(G/K)$ restricted to $\linfty(G/K)$ is a $G$-equivariant conditional expectation $\linfty(G/K) \ra \cont(\fb G) \subset \linfty(G/K)$.  Taking $K$-fixed points then yields a $\cG$-equivariant conditional expectation $\linfty(\cG) \ra \cont(K \bs \fb G)$.  This proves $\cG$-injectivity of $\linfty(\cG)$ and finishes the proof.
\end{proof}

\subsection{Boundary actions of discrete hypergroups}
\label{sec:boundary-actions-hyper-groups}

The following definition of minimal and strongly proximal actions of hypergroups will be justified by the rigidity results of Theorems~\ref{thm:hypergroup-rigidity}~and~\ref{thm:hypergroup-furstenberg-boundary}.
\begin{definition}
  Let $\cG$ be a hypergroup and $X$ a compact $\cG$-space.
  \begin{itemize}
  \item $\cG \grpaction{} X$ is minimal if $\ol{\mathrm{conv}}(\cG \delta_x) = \cP(X)$, for every $x \in X$.
  \item $\cG \grpaction{} X$ is strongly proximal if $\ol{\cG \mu} \cap X \neq \emptyset$ for every Borel probability measure $\mu \in \cP(X)$.
  \item A minimal and strongly proximal $\cG$-space is called a $\cG$-boundary.
  \end{itemize}
\end{definition}

\begin{proposition}
  Let $G$ be a totally disconnected group and $K \leq G$ a compact open subgroup.  Denote by $\cG = K \bs G /K$ the quotient hypergroup.  If $X$ is any $G$-boundary such that $K \bs X$ is totally disconnected, then $K \bs X$ is a $\cG$-boundary.
\end{proposition}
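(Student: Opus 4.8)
The plan is to transport the boundary properties of the $G$-space $X$ to the $\cG$-space $K \bs X$ along the orbit map $q \colon X \ra K \bs X$, $x \mapsto Kx$, using pushforward of measures. First I would set up the dictionary. Since $K$ is compact, pushforward along $q$ gives an affine, weak-$*$ continuous, surjective map $q_* \colon \cP(X) \ra \cP(K \bs X)$, and $\cP(K \bs X)$ is identified with the set $\cP(X)^K$ of $K$-invariant measures via the unique $K$-invariant lift. Writing $\mu^K := \int_K k_* \mu \, \rmd k$ for the $K$-average of $\mu \in \cP(X)$, the heart of the setup is the intertwining identity
\begin{equation*}
  (KgK) \cdot q_*(\mu) = q_*\bigl( g_* \mu^K \bigr) \eqcomma
\end{equation*}
obtained by dualising the defining formula $(KgK) f = \int_K (kg) f \, \rmd k$ on $\cont(X)^K = \cont(K \bs X)$ and using invariance of Haar measure on $K$ (with the opposite convention for the $\cG$-action on measures, $g$ is replaced by $g^{-1}$, which is immaterial below). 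In particular, for a $K$-invariant $\tilde\mu$ one gets $(KgK) \cdot q_*(\tilde\mu) = q_*(g_* \tilde\mu)$, so that $\cG \cdot q_*(\tilde\mu) = q_*(G \cdot \tilde\mu)$ as subsets of $\cP(K \bs X)$, independently of the inversion convention since $g$ ranges over all of $G$.

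For strong proximality, given $\mu \in \cP(K \bs X)$ I would take its $K$-invariant lift $\tilde\mu \in \cP(X)^K$. Strong proximality of the $G$-boundary $X$ yields a net $(g_i)$ in $G$ and a point $x^* \in X$ with $(g_i)_* \tilde\mu \ra \delta_{x^*}$ weak-$*$. Applying the continuous map $q_*$ and the intertwining identity gives $(Kg_iK) \cdot \mu = q_*\bigl( (g_i)_* \tilde\mu \bigr) \ra \delta_{q(x^*)}$, a point mass in $K \bs X$. Hence $\ol{\cG \mu} \cap (K \bs X) \neq \emptyset$.

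For minimality I would first record the standard boundary fact that every orbit closure $\ol{G \nu}$, $\nu \in \cP(X)$, contains all point masses: strong proximality places one point mass $\delta_{x^*}$ in the $G$-invariant closed set $\ol{G \nu}$, and minimality ($\ol{G x^*} = X$) then forces $\{\delta_x : x \in X\} \subseteq \ol{G \nu}$. Fix $y = Kx_0 \in K \bs X$; its $K$-invariant lift is the uniform orbit measure $\sigma_{x_0} := \int_K \delta_{k x_0} \, \rmd k$, which satisfies $q_* \sigma_{x_0} = \delta_y$, so $(KgK) \cdot \delta_y = q_*(g_* \sigma_{x_0})$. For any $z = q(x_1) \in K \bs X$ the fact above gives a net $(g_i)$ with $(g_i)_* \sigma_{x_0} \ra \delta_{x_1}$; pushing forward yields $(Kg_iK) \cdot \delta_y \ra \delta_z$, so every point mass lies in $\ol{\cG \delta_y}$. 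Taking closed convex hulls, $\ol{\conv}(\cG \delta_y) \supseteq \ol{\conv}\{\delta_z : z \in K \bs X\} = \cP(K \bs X)$, and the reverse inclusion is immediate; this is minimality.

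The main obstacle is bookkeeping the intertwining identity of the first paragraph correctly — checking that the $\cG$-action on $\cP(K \bs X)$ really corresponds, through $q_*$, to the $G$-action on $K$-averaged measures, and that point masses of $K \bs X$ are exactly the $q_*$-images of point masses of $X$. Once this dictionary is in place, both boundary properties follow formally from those of $X$. It is worth noting that total disconnectedness of $K \bs X$ is not actually needed for minimality and strong proximality; it belongs to the standing hypotheses that make $\cG = K \bs G / K$ a discrete hypergroup and is required for the companion rigidity statements rather than here.
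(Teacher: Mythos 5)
Your proof is correct and takes essentially the same route as the paper's: both reduce minimality and strong proximality of $K\bs X$ to the classical fact that on a $G$-boundary the orbit closure $\ol{G\nu}$ of any measure contains every point mass, applied to $K$-invariant measures, and you simply make explicit the dictionary $q_*\colon \cP(X)^K \ra \cP(K\bs X)$ and the intertwining identity that the paper leaves implicit. Your closing remark is also fair: the paper invokes total disconnectedness of $K\bs X$ (to produce $K$-invariant clopen neighbourhood bases), but since $q_*$ is weak-$*$ continuous, affine, and restricts to a bijection from $K$-invariant measures onto $\cP(K\bs X)$ for any compact $K$, the argument indeed goes through without that hypothesis.
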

\begin{proof}
  Since $K \bs X$ is totally disconnected, every closed $K$-invariant subset of $X$ has a neighbourhood basis of $K$-invariant clopen sets.  So it suffices to show that if $\mu$ is an arbitrary $K$-invariant probability measure on $X$, then $\ol{G \mu}$ contains all point-masses.  But this is an immediate consequence of the fact that $X$ is a $G$-boundary.
\end{proof}

\begin{remark}[Orbit spaces of the Furstenberg boundary are totally disconnected]
  If $K \leq G$ is a compact open subgroup of a totally disconnected group, then $K \bs \fb G$ is extremally disconnected and hence totally disconnected.  Indeed, the $G$-equivariant expectation onto $\cont(\fb G) \subset \contblu(G)$ induces an expectation of the $K$-fixed points $\cont(K \bs \fb G) \subset \linfty(K \bs G)$.  So $\cont(K \bs \fb G)$ is an injective \Cstar-algebra and hence $K \bs \fb G$ extremally disconnected by \cite[Theorem 2.5]{gleason58}.
\end{remark}

\begin{theorem}
  \label{thm:hypergroup-rigidity}
  Let $\cG$ be a discrete hypergroup, $X$ a $\cG$-boundary and $\cG \grpaction{} Y$ a minimal $\cG$-space.  Then every $\cG$-ucp map $\cont(X) \ra \cont(Y)$ is injective.  If moreover, $\cont(X)$ is assumed to be $\cG$-injective, then $\cont(X)$ follows $\cG$-rigid.
\end{theorem}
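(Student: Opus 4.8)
The plan is to dualise everything to the state spaces $\cP(X)$ and $\cP(Y)$ and to exploit the defining properties of a $\cG$-boundary there, in close analogy with Ozawa's treatment of the group case. For the first assertion, given a $\cG$-ucp map $\vphi \colon \cont(X) \to \cont(Y)$ I would pass to the adjoint $\vphi^* \colon \cP(Y) \to \cP(X)$, an affine weak-$*$ continuous map intertwining the induced (affine) actions of $\cG$ on the two state spaces (equivariance of $\vphi$ dualises to $\vphi^*(g\nu) = g\,\vphi^*(\nu)$). The set $K := \vphi^*(\cP(Y))$ is then compact, convex and forward $\cG$-invariant. Strong proximality of $X$ forces a point mass into $K$: for any $\nu \in K$ one has $\ol{\cG\nu} \subseteq K$ and $\ol{\cG\nu} \cap X \neq \emptyset$, so some $\delta_{x_0} \in K$. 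Minimality of $X$ upgrades this to $K = \ol{\conv}(\cG\delta_{x_0}) = \cP(X)$, i.e. $\vphi^*$ is onto $\cP(X)$. Injectivity of $\vphi$ is then immediate: $\vphi(f) = 0$ forces $\mu(f) = 0$ for every $\mu$ in the range of $\vphi^*$, hence for all $\mu \in \cP(X)$, so $f = 0$.

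For the rigidity statement I specialise to $Y = X$, so that the surjectivity just obtained holds for \emph{every} $\cG$-ucp self-map $\psi$ of $\cont(X)$. The first new ingredient is that such a $\psi$ is automatically a $\cG$-complete order embedding. Since $\cont(X)$ is commutative, positivity of an element of $\cont(X) \ot \rM_n(\CC) = \cont(X, \rM_n(\CC))$ is tested by the scalar functions $x \mapsto \langle \xi, a(x)\xi\rangle$; this reduces the complete order embedding property to the scalar implication $\psi(f) \geq 0 \Rightarrow f \geq 0$, which is precisely surjectivity of $\psi^*$ (evaluate $\psi(f)\geq 0$ against a $\nu$ with $\psi^*(\nu) = \delta_x$). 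Hence $\psi$ is a $\cG$-complete order isomorphism onto the $\cG$-operator subsystem $\psi(\cont(X))$.

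Now I bring in $\cG$-injectivity. Given a $\cG$-ucp self-map $\vphi$, its inverse $\vphi^{-1} \colon \vphi(\cont(X)) \to \cont(X)$ is $\cG$-ucp, and $\cG$-injectivity of $\cont(X)$ extends it to a $\cG$-ucp map $\Psi \colon \cont(X) \to \cont(X)$ with $\Psi\vphi = \id$. Applying the previous paragraph to $\Psi$ as well, both $\vphi^*$ and $\Psi^*$ are onto; together with $\vphi^*\Psi^* = \id$ this makes them mutually inverse affine homeomorphisms of $\cP(X)$. An affine homeomorphism permutes the extreme points $X \subset \cP(X)$, so $\vphi^*$ is induced by a $\cG$-equivariant homeomorphism $\sigma \colon X \to X$, i.e. $\vphi(f) = f \circ \sigma$. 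Finally I would show $\sigma = \id$: applying strong proximality to $\frac{1}{2}(\delta_x + \delta_{\sigma x})$ produces a net $(g_i)$ with $g_i\delta_x$ and $g_i\delta_{\sigma x}$ both converging to one point mass $\delta_z$ (an average of probability measures can converge to an extreme point only if both summands do); pushing forward through the continuous $\sigma_*$ gives $\sigma(z) = z$, and equivariance then shows $\sigma_*$ fixes $\cG\delta_z$ pointwise, hence all of $\ol{\conv}(\cG\delta_z) = \cP(X)$ by minimality, so $\sigma = \id$ and $\vphi = \id$.

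The crux is the rigidity half, and specifically the recognition that $\cG$-injectivity is exactly what promotes an a priori merely order-surjective self-map to a \emph{multiplicative} one: the left inverse supplied by injectivity turns surjectivity of the adjoint into bijectivity, and only then is $\vphi$ a genuine homeomorphism on which the Furstenberg-type fixed-point argument can act. The two places demanding care are the commutative reduction underlying the complete-order-embedding step and the correct handling of the measure-valued hypergroup action throughout the fixed-point argument, where one no longer has honest self-maps of $X$ but only the dual action on $\cP(X)$.
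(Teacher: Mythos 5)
Your argument is correct, and its skeleton coincides with the paper's: dualise to $\vphi^*\colon\cP(Y)\to\cP(X)$, use strong proximality to put a point mass in the (compact, convex, forward $\cG$-invariant) image and minimality to conclude $\vphi^*$ is onto, hence $\vphi$ is injective; then use $\cG$-injectivity to extend the inverse of a self-map and deduce surjectivity. Where you go beyond the paper is in two places, both to your credit. First, the paper simply asserts that $\vphi^{-1}\colon\vphi(\cont(X))\to\cont(X)$ is $\cG$-ucp (and invokes Stinespring for closedness of the image); you actually justify this by showing that surjectivity of $\vphi^*$ makes $\vphi$ a complete order embedding, using commutativity to reduce matrix positivity to the scalar implication $\vphi(f)\geq 0\Rightarrow f\geq 0$ — this step is genuinely needed and your reduction is sound. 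Second, and more substantially: the paper's written proof stops at $S=\cont(X)$, i.e.\ at the statement that $\vphi$ is a bijective $\cG$-order isomorphism, whereas rigidity as defined (and as used later, in the proof of Theorem \ref{thm:relating-simplicity}, to identify $\Psi_K|_{\cont(K\bs\fb G)p_K}$ with \emph{the} canonical isomorphism) requires $\vphi=\id$. Your final paragraph — realising $\vphi^*$ as the pushforward of a homeomorphism $\sigma$ of $X$, producing a $\sigma$-fixed point $z$ by applying strong proximality to $\tfrac12(\delta_x+\delta_{\sigma x})$ and using extremality of $\delta_z$ to force both summands to converge, then propagating $\vphi^*(\delta_z)=\delta_z$ through $\ol{\conv}(\cG\delta_z)=\cP(X)$ by minimality — supplies exactly the missing step, and the extremality argument correctly handles the hypergroup subtlety that $g\delta_x$ need not be a point mass. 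In short: same route as the paper on the overlap, but your version is the complete one.
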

\begin{proof}
  A $\cG$-ucp map $\vphi: \cont(X) \ra \cont(Y)$ corresponds to a $\cG$-equivariant map $\vphi^*:\cP(Y) \ra \cP(X)$.  Let $\mu \in \im \vphi^*$.  Then $\ol{\cG \mu} \cap X \neq \emptyset$ by strong proximality of $\cG \grpaction{} X$.  Further minimality of $X$ implies that $\vphi^*$ is surjective.  So $\vphi$ is injective.

Now assume that $\cont(X)$ is $\cG$-injective and let $\vphi: \cont(X) \ra \cont(X)$ be a $\cG$-ucp map.  Then $\vphi$ is injective by the first part of the proof.  Let $S = \vphi(\cont(X))$ be the image of $\vphi$, which is closed by the Stinespring dilation theorm.  Denote $\psi = \vphi^{-1}: S \ra \cont(X)$, which is a $\cG$-ucp map.  By $\cG$-injectivity of $\cont(X)$, there is a $\cG$-ucp extension of $\psi$:
\begin{equation*}
  \xymatrix{
    S \ar[d] \ar[r]^\psi & \cont(X) \\
    \cont(X) \ar[ur]^{\tilde \psi}
  }
  \eqstop
\end{equation*}
$\tilde \psi$ must be injective by the first part of the proof.  Since $\psi$ is surjective, this shows that $S = \cont(X)$.
\end{proof}

Summarising the results of Sections \ref{sec:injective-operator-systems} and \ref{sec:boundary-actions-hyper-groups}, we obtain the existence of a Furstenberg boundary for quotient hypergroups.
\begin{theorem}
  \label{thm:hypergroup-furstenberg-boundary}
  Denote by $\fb G$ the Furstenberg boundary of a totally disconnected group $G$.  Let $K \leq G$ be a compact open subgroup and $\cG = K \bs G /K$ the quotient hypergroup.  Then $\cont(K \bs \fb G)$ is a rigid and injective $\cG$-operator system.
\end{theorem}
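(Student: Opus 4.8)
The plan is to treat this statement as a synthesis of the three results already established in this section, so that no genuinely new argument is needed; the work lies entirely in checking that the hypotheses of Theorem~\ref{thm:hypergroup-rigidity} are met for $X := K \bs \fb G$, viewed as a $\cG$-operator system via $\cont(X) = \cont(\fb G)^K$ with its fixed-point $\cG$-action. The three ingredients I would assemble are: (i) $\cG$-injectivity of $\cont(K \bs \fb G)$; (ii) the fact that $K \bs \fb G$ is a $\cG$-boundary; and (iii) the abstract rigidity theorem. Once (i) and (ii) are in place, (iii) gives the conclusion immediately.

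First I would record injectivity, which is precisely the content of the Proposition asserting that $\cont(K \bs \fb G)$ is a $\cG$-injective operator system, so nothing further is required there. Next I would verify that $X$ is a $\cG$-boundary by applying the Proposition on boundary actions of discrete hypergroups with the $G$-boundary $\fb G$ in the role of the ambient space. That Proposition has two hypotheses, and both hold: $\fb G$ is a $G$-boundary because it is Furstenberg's maximal $G$-boundary, hence minimal and strongly proximal; and $K \bs \fb G$ is totally disconnected by the Remark on orbit spaces of the Furstenberg boundary, which deduces extremal (hence total) disconnectedness from injectivity of $\cont(K \bs \fb G)$ via Gleason's theorem. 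Therefore $X = K \bs \fb G$ is a $\cG$-boundary.

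Finally I would invoke the second assertion of Theorem~\ref{thm:hypergroup-rigidity}, which states that a $\cG$-boundary whose function algebra is $\cG$-injective is automatically $\cG$-rigid. Applying it to $X$, for which we have just established both that it is a $\cG$-boundary and that $\cont(X)$ is $\cG$-injective, yields that $\cont(K \bs \fb G)$ is a rigid and injective $\cG$-operator system, as claimed. I expect the only delicate point to be the interlocking of the hypotheses in the middle step: Theorem~\ref{thm:hypergroup-rigidity} presupposes a $\cG$-boundary, and the only route to that property runs through the boundary Proposition, which is in turn available only after total disconnectedness of $K \bs \fb G$ is secured. Thus the real crux is the Remark's observation that $\cG$-injectivity forces extremal disconnectedness; once that is granted, the remainder is a formal concatenation of the preceding results.
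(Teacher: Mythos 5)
Your proposal is correct and matches the paper exactly: the paper offers no separate argument, introducing the theorem with the phrase that it ``summarises'' the preceding results, namely the $\cG$-injectivity proposition, the $\cG$-boundary proposition combined with the remark on extremal disconnectedness of $K \bs \fb G$, and Theorem~\ref{thm:hypergroup-rigidity}. Your assembly of these three ingredients, including the observation that total disconnectedness of $K \bs \fb G$ is the hinge making the boundary proposition applicable, is precisely the intended proof.
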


\section{Relating $\Cstarred(G)$ and $\cont(\partial_F G) \rtimes_r G$}
\label{sec:relating-things}

This section is devoted to proving that for an arbitrary second countable unimodular locally compact group $G$, simplicity of $\Cstarred(G)$ implies simplicity of the crossed product with the Furstenberg boundary $\cont(\fb G) \rtimes_r G$ (Theorem \ref{thm:relating-simplicity}).  This is the only part in the proof of our main Theorem \ref{thm:main}, where we have to assume unimodularity.  It would be interesting to know whether this assumption is necessary in the statements of Theorem \ref{thm:relating-simplicity} and Theorem \ref{thm:main}.

Our main technical ingredient for the proof of Theorem \ref{thm:relating-simplicity} is the following version of the Hahn-Banach extension theorem for Plancherel weights.
\begin{lemma}
  \label{lem:hahn-banach}
  Let $G$ be a second countable totally disconnected locally compact group.  Let $A$ be a \Cstar-algebra containing $\Cstarred(G)$ in a non-degenerate way (that is $A \Cstarred(G) \subset A$ is dense).  Then every Plancherel weight on $\Cstarred(G)$ can be extended to a densely defined weight on $A$.
\end{lemma}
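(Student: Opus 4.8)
The plan is to realise $\vphi$ as a countable sum of bounded positive functionals supported on mutually orthogonal corners of $\Cstarred(G)$, to extend each summand separately by Hahn--Banach, and to add the extensions back up. A sum of positive functionals is automatically additive and positively homogeneous, hence a weight, and the orthogonality of the corners will take care both of the compatibility of the separate extensions and of dense definedness. First I fix the combinatorial data: since $G$ is second countable and totally disconnected it admits a decreasing sequence $K_1 \supseteq K_2 \supseteq \cdots$ of compact open subgroups forming a neighbourhood basis of the identity, so the averaging projections satisfy $p_{K_n} \nearrow 1$ strongly on $\Ltwo(G)$ and are cofinal among all $p_K$. Putting $p_{K_0} := 0$ and $q_n := p_{K_n} - p_{K_{n-1}}$ gives mutually orthogonal projections in $\Cstarred(G)$ with $\sum_n q_n = 1$; since compact subgroups lie in the kernel of the modular function, each $p_{K_n}$, and hence each $q_n$, is fixed by the modular automorphism group $(\sigma_t)_t$.

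The heart of the argument is the identity $\vphi = \sum_n \vphi(q_n \cdot q_n)$ on $\Cstarred(G)^+$. I write $d_n := \vphi(q_n \cdot q_n)$, a bounded positive functional of norm $\vphi(q_n) \leq \vphi(p_{K_n}) = \tfrac{1}{\mu(K_n)} < \infty$, supported on the corner $q_n \Cstarred(G) q_n$. Because the $q_n$ are $\sigma$-invariant, the KMS condition lets them pass through $\vphi$ as if central, so the off-diagonal contributions vanish, $\vphi(q_m c q_n) = \vphi(c\, q_n q_m) = 0$ for $m \neq n$, whence $\vphi(p_{K_N} c p_{K_N}) = \sum_{n \leq N} d_n(c)$ for every $c \in \Cstarred(G)^+$. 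In particular the partial sums increase; lower semicontinuity of $\vphi$ together with $p_{K_N} c p_{K_N} \to c$ in norm gives $\vphi(c) \leq \sum_n d_n(c)$. For the reverse inequality I pass to the GNS representation of $\vphi$ on $\Ltwo(G)$, with $\Lambda$ the canonical embedding of the integrable elements and $J$ the modular conjugation: right multiplication by the $\sigma$-invariant projection $q_n$ is implemented by the projection $J q_n J$, and these are mutually orthogonal with $\sum_n J q_n J = 1$. Hence, whenever $\vphi(c) < \infty$, Parseval's identity yields $\sum_n d_n(c) = \sum_n \|\Lambda(c^{1/2} q_n)\|^2 = \|\Lambda(c^{1/2})\|^2 = \vphi(c)$, and combined with the previous inequality this gives $\vphi = \sum_n d_n$ on all of $\Cstarred(G)^+$.

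With the decomposition in hand the extension is routine. Each $d_n$ restricts to a positive functional on the corner $C^*$-algebra $q_n \Cstarred(G) q_n$, which is a unital (with unit $q_n$) $C^*$-subalgebra of $q_n A q_n$; by the Hahn--Banach extension theorem for states I extend it to a positive functional $e_n$ on $q_n A q_n$ of the same norm and set $\tilde d_n := e_n(q_n \cdot q_n)$, a bounded positive functional on $A$ restricting to $d_n$ on $\Cstarred(G)$. I then define
\[
  \psi(a) := \sum_{n} \tilde d_n(a), \qquad a \in A^+ .
\]
As a sum of non-negative terms this is a well-defined map $A^+ \to [0,\infty]$, and additivity and positive homogeneity are immediate from those of the $\tilde d_n$, so $\psi$ is a weight. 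It restricts to $\sum_n d_n = \vphi$ on $\Cstarred(G)^+$, so it extends $\vphi$. It is densely defined because non-degeneracy makes $(p_{K_n})$ an approximate unit of $A$, so $\bigcup_m p_{K_m} A p_{K_m}$ is dense; and for $a \in (p_{K_m} A p_{K_m})^+$ one has $q_n p_{K_m} = 0$ for $n > m$, whence $q_n a q_n = 0$ and $\psi(a) = \sum_{n \leq m} \tilde d_n(a) < \infty$.

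The step I expect to be the real obstacle is securing additivity of the extension on all of $A^+$, not merely on a dense subalgebra. The naive recipe $\psi(a) = \sup_K \vphi(p_K a p_K)$ fails: on $\Cstarred(G)$ this supremum recovers $\vphi$ precisely because the compression $a \mapsto p_K a p_K$ is monotone there, but that compression is \emph{not} monotone on the larger algebra $A$, so the supremum of arbitrary Hahn--Banach extensions is neither additive nor an extension of $\vphi$. Decomposing $\vphi$ along the mutually orthogonal, $\sigma$-invariant corners $q_n$ is exactly what repairs this: it replaces the extension problem by countably many \emph{independent} state extensions whose sum is additive by construction, while orthogonality simultaneously forces finiteness on each corner. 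Accordingly, I expect the most delicate point to be the identity $\vphi = \sum_n d_n$ in the non-unimodular case, where the vanishing of the cross terms and the Parseval computation rest on the modular invariance of the averaging projections rather than on any trace property.
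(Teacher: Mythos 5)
Your proof is correct, but it takes a genuinely different route from the paper. The paper does not decompose the weight; instead it extends each compression $\vphi_K = \vphi|_{p_K \Cstarred(G) p_K}$ to the corner $p_K A p_K$, using the Markov--Kakutani fixed point theorem to choose extensions invariant under conjugation by the symmetries $2p_L - 1$ (so that the projections $p_L$, $L \in \cN$, stay in the centraliser of each extension), then a diagonal subsequence so that the family $(\psi_{L,0}|_{A_K})_L$ converges, and finally sets $\psi = \limsup_K \psi_K$; the bulk of the work there is a $\|\cdot\|_{\psi_L,2}$-estimate showing that $(\psi_K(x))_K$ actually converges whenever $\psi(x) < \infty$, which is what makes the $\limsup$ additive. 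Your telescoping $\vphi = \sum_n \vphi(q_n \cdot q_n)$ with $q_n = p_{K_n} - p_{K_{n-1}}$ replaces all of that by countably many independent norm-preserving state extensions on the orthogonal corners $q_n A q_n$, so additivity, dense definedness and even lower semicontinuity of the sum come for free; this is arguably cleaner. Both arguments ultimately rest on the same structural fact, namely that the averaging projections lie in the centraliser of the Plancherel weight (the paper uses it to make $\vphi_K$ invariant under the group $U$; you use it to kill the cross terms $\vphi(q_m c q_n)$, $m \neq n$). Two small points in your write-up deserve the extra care you partly supply: the one-line ``KMS lets $q_m$ pass through as if central'' needs domain hypotheses, and the rigorous version is indeed your GNS computation; and for the partial-sum identity $\vphi(p_{K_N} c p_{K_N}) = \sum_{n \leq N} d_n(c)$ you need the cross terms to vanish also when $\vphi(c) = \infty$, where $\Lambda(c^{1/2})$ is undefined --- this is still fine because $\Lambda(c^{1/2} q_n) = \Lambda((c^{1/2} q_n) q_n) = J q_n J\, \Lambda(c^{1/2} q_n)$ lies in the range of $J q_n J$, and these ranges are mutually orthogonal.
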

\begin{proof}
  Let $\vphi$ a Plancherel weight on $\Cstarred(G)$.  For a compact open subgroup $K \leq G$ we let $A_K := p_K A p_K$ and $\vphi_K$ the positive linear functional on $p_K \Cstarred(G) p_K$ obtained by restricting $\vphi$.  Since $G$ is second countable and totally disconnected, we can fix some countable neighbourhood basis $\cN$ of $\{e\}$ consisting of compact open subgroups.  We can assume that $\cN$ is strictly ordered by inclusion.  Let $U$ be the group generated by the symmetries $2p_K - 1$ for $K \in \cN$.  Since $\cN$ is ordered by inclusion, $U$ is an abelian group.  $[p_K, p_L] = 0$ for all compact open subgroups $K, L \leq G$, implies that $U$ acts by conjugation on $A_K$ and on $p_K \Cstarred(G) p_K$.  Since every compact open subgroup of $K \leq G$ is contained in the kernel of the modular function of $G$, we that $\vphi(p_K x) = \vphi(x p_K)$ for all $x \in \Cstarred(G)$.  Hence, the functionals $\vphi_K$ are $U$-invariant.  So the set of all extensions of $\vphi_K$ to a positive linear functional on $A_K$ is a $U$-invariant compact convex space.  So by the Markov-Kukatani theorem we find a $U$-invariant extension $\psi_{K,0}$ of $\vphi_K$ to $A_K$.  Since $\psi_{K,0}$ is $U$-invariant, it follows that $\psi_{K,0} (p_L x) = \psi_{K,0}(xp_L)$ for all $L \in \cN$ and all $x \in A_K$.

Since the state space of $A_K$ is compact, we can employ a diagonal sequence argument to pass to a subset of $\cN$ and assume that the sequence $(\psi_{L,0}|_{A_K})_{L \in \cN, L \leq K}$ converges for every $K$.  Denote the limit by $\psi_K$ and note that $\psi_K(p_L x) = \psi_K(x p_L)$ for every $L \in \cN$ and every $x \in A_K$.  Define $\psi = \limsup_{K \in \cN} \psi_K$, where each $\psi_K$ is viewed as a functional on $A$ after compressing with $p_K$.  Then $\psi: A^+ \ra \RR_{\geq 0} \cup \{\infty\}$ restricts to the Plancherel weight $\vphi$ on $\Cstarred(G)$.  Further $\psi(p_K x p_K) = \psi_K (p_K x p_K)$ for all $K \in \cN$ and all $x \in A^+$.  Since $p_K x p_K \ra x$ for all $x \in A^+$, it follows that $\psi$ is densely defined.

We show that $\psi$ is a weight.  If $x \in A^+$ and $\lambda \in \RR_{> 0}$, then 
\begin{equation*}
  \psi(\lambda x) = \limsup \psi_K( \lambda x ) = \lambda \limsup_K \psi_K(x) = \lambda \psi(x)
  \eqstop
\end{equation*}
Now let $x, y \in A^+$.  We have
\begin{equation*}
  \psi(x + y) = \limsup \psi_K(x + y) \leq \limsup \psi_K(x) + \limsup \psi_K(y) = \psi(x) + \psi(y)
  \eqstop
\end{equation*}
So it suffices to show that $\psi(x + y) \geq \psi(x) + \psi(y)$ in order to conclude that $\psi$ is a weight.  If $\psi(x) = \infty$, then
\begin{equation*}
  \psi(x + y) = \limsup \psi_K(x + y) \geq \limsup \psi_K(x) = \infty
  \eqcomma
\end{equation*}
by positivity of each $\psi_K$.  So $\psi(x + y) = \psi(x) + \psi(y)$ in the case $\psi(x) = \infty$ and by symmetry also in the case $\psi(y) = \infty$.  So we may assume that $\psi(x), \psi(y) < \infty$.  We show that the sequences $(\psi_K(x))_K$, $(\psi_K(y))_K$  and $(\psi_K(x + y))_K$ are convergent.
Let $K \in \cN$ and $\veps > 0$.  For small enough $L \in \cN$ contained in $K$, we have $\|p_K x^{1/2} p_L x^{1/2} p_K - p_K x p_K\| < \veps / \psi(p_K)$.  Using the facts that $p_L p_K = p_K$ and that $p_K$ is in the centraliser of $\psi_L$, we obtain the following estimate.
\begin{align*}
  \psi_K(x)
  & =
  \psi_K(p_K x p_K) \\
  & \leq
  \psi_K(p_K x^{1/2} p_L x^{1/2} p_K) + \veps \\
  & = 
  \psi_L(p_K x^{1/2} p_L x^{1/2} p_K) + \veps \\
  & = 
  \|p_L x^{1/2} p_K\|_{\psi_L, 2}^2 + \veps \\
  & \leq
  \|p_L x^{1/2} p_L\|_{\psi_L, 2}^2 \|p_K\|^2 + \veps \\
  & =
  \psi_L(p_L x^{1/2} p_L x^{1/2} p_L) + \veps \\
  & \leq
  \psi_L(p_L x p_L) + \veps\\
  & =
  \psi_L(x) + \veps
  \eqstop
\end{align*}
This is sufficient to show convergence of $(\psi_K(x))_K$.  The same argument implies convergence of $(\psi_K(y))_K$ and $(\psi_K(x + y))_K$.  This yields
\begin{equation*}
  \psi(x + y) = \lim_m \psi_m (x + y) = \lim_m \psi_m(x) + \lim_m \psi_m(y) = \psi(x) + \psi(y)
  \eqcomma
\end{equation*}
which finishes the proof of the lemma.
\end{proof}

\begin{theorem}
  \label{thm:relating-simplicity}
  Let $G$ be a unimodular locally compact  second countable group.  If $\Cstarred(G)$ is simple, then $\cont(\partial_F G) \rtimes_r G$ is simple.
\end{theorem}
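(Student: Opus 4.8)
The plan is to combine a reduction to the totally disconnected case, the compression technique of Section~\ref{sec:relating-things}, and the two novelties of the paper — the weight extension of Lemma~\ref{lem:hahn-banach} and the hypergroup boundary rigidity of Theorem~\ref{thm:hypergroup-furstenberg-boundary}.

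\emph{Reduction and reformulation.} First I would note that the hypothesis already forces $G$ to be totally disconnected: by \cite{raum15-powers} a unimodular locally compact second countable group that is not totally disconnected has non-simple reduced group \Cstar-algebra, so simplicity of $\Cstarred(G)$ lets us assume $G$ totally disconnected and apply all of the machinery developed above. Write $B := \cont(\fb G) \rtimes_r G$, which is unital since $\fb G$ is compact. The inclusion $\Cstarred(G) \subset B$ is non-degenerate, because $\bigcup_K p_K B p_K$ is dense and each $p_K b p_K = (p_K b p_K) p_K$ lies in $B\Cstarred(G)$. Hence it suffices to prove that every nonzero closed two-sided ideal $I$ of $B$ meets $\Cstarred(G)$: then $I \cap \Cstarred(G)$ is a nonzero ideal of the simple algebra $\Cstarred(G)$, so $\Cstarred(G) \subset I$, and non-degeneracy gives $B = \overline{B\Cstarred(G)} \subset I$, i.e. $I = B$.

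\emph{Compression and the hypergroup boundary.} To prove this intersection property, fix a nonzero $I$. Density of $\bigcup_K p_K B p_K$ yields a compact open $K \leq G$ with $p_K I p_K \neq 0$; choose $0 \neq a \in (p_K I p_K)^+$. Put $\cG = K\bs G/K$ and recall the faithful conditional expectation $E_K\colon p_K B p_K \to \cont(K\bs\fb G)$ that restricts to the normalised Plancherel weight on $p_K\Cstarred(G)p_K$, together with the fact that $\cont(K\bs\fb G)$ is a rigid and injective $\cG$-operator system (Theorem~\ref{thm:hypergroup-furstenberg-boundary}). The structural point that makes the boundary geometry usable is that the $\cG$-action on the corner is implemented by averaged conjugations by the unitaries $u_g$ followed by compression with $p_K$; since conjugation by a unitary preserves the two-sided ideal $I$ and integration preserves its closure, the $\cG$-action leaves $p_K I p_K$ invariant.

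\emph{The weight argument.} Now I would argue by contradiction, assuming $I \cap \Cstarred(G) = 0$, so that $\Cstarred(G)$ embeds non-degenerately into $C := B/I$ (as $q(p_K) \to 1_C$). By Lemma~\ref{lem:hahn-banach} the tracial Plancherel weight $\vphi$ extends to a densely defined weight $\rho$ on $C$; pulling back along the quotient map $q$ produces a weight $\rho\circ q$ on $B$ that extends $\vphi$ and annihilates $I$. Restricted to $p_K B p_K$ this gives a positive functional extending the centralised tracial functional $\vphi_K$ on $p_K\Cstarred(G)p_K$ and vanishing on the nonzero element $a$. I would then deploy $\cG$-rigidity exactly as rigidity of the Furstenberg boundary is used in \cite{kalantarkennedy14-boudaries,ozawa14-boundaries-lecture-notes}: rigidity and injectivity of $\cont(K\bs\fb G)$ force every $\vphi_K$-extension to factor through the conditional expectation $E_K$, and faithfulness of $E_K$ then makes that functional faithful on $p_K B p_K$. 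This contradicts its vanishing on $0 \neq a$, establishing $I \cap \Cstarred(G) \neq 0$ and hence the theorem.

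\emph{Main obstacle.} The hard part is the final step, namely pinning down any extension of $\vphi_K$ on the corner and proving it faithful. This is precisely where the two novelties must interlock: the Hahn-Banach extension of Lemma~\ref{lem:hahn-banach} supplies a weight with the correct restriction to $\Cstarred(G)$, while the hypergroup rigidity of Theorems~\ref{thm:hypergroup-rigidity} and~\ref{thm:hypergroup-furstenberg-boundary} is what removes the ambiguity of the extension by identifying it with $E_K$. Two accompanying technicalities are unavoidable: unimodularity is needed so that $\vphi$ is tracial and the compressions $\vphi_K$ are centralised (as throughout the proof of Lemma~\ref{lem:hahn-banach}), and second countability is needed for the diagonal argument assembling the corner data. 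Verifying genuine faithfulness on the extremally disconnected space $K\bs\fb G$ is the most delicate point of the whole argument.
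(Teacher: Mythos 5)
Your overall architecture matches the paper's: reduce to the totally disconnected case via \cite{raum15-powers}, pass to the quotient $B/I$ (the paper phrases this with an arbitrary $*$-representation $\pi$, which amounts to the same thing), extend the Plancherel weight using Lemma~\ref{lem:hahn-banach}, and use hypergroup boundary rigidity on the corners $p_K B p_K$ to identify something with the faithful canonical expectation onto $\cont(K \bs \fb G)$. The ideal-versus-representation reformulation and the observation that $p_K I p_K$ is $\cG$-invariant are fine but cosmetic differences.

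However, the decisive step is misstated in a way that would not work as written. You claim that rigidity forces ``every $\vphi_K$-extension to factor through the conditional expectation $E_K$'' and that ``faithfulness of $E_K$ then makes that functional faithful on $p_K B p_K$''. Neither assertion is available: a scalar positive functional on $p_K B p_K$ extending $\vphi_K$ is not an object to which $\cG$-rigidity of $\cont(K \bs \fb G)$ applies (Theorems~\ref{thm:hypergroup-rigidity} and~\ref{thm:hypergroup-furstenberg-boundary} concern $\cG$-ucp self-maps of $\cont(K \bs \fb G)$, not states), and even a state of the form $\mu \circ E_K$ is faithful only if the measure $\mu$ has full support, which nothing guarantees. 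The missing construction --- and the actual content of the paper's proof --- is the promotion of the extended weight $\psi$ to a $G$-equivariant unbounded cp map $\Psi_0(x)(g) = \psi(u_g^* x u_g)$ into $\contblu(G)$ (this is where unimodularity really enters, giving the bound $\psi(u_g p_K x p_K u_g^*) \le \psi(p_K)\|x\|$ and making $\Psi_0(p_K)$ constant; Lemma~\ref{lem:hahn-banach} itself does not need unimodularity), followed by the $G$-equivariant expectation $\contblu(G) \ra \cont(\fb G)$. It is the resulting corner map $\Psi_K : p_K B p_K \ra \cont(K \bs \fb G)$ whose restriction to $\cont(K \bs \fb G) p_K$ is a $\cG$-ucp self-map, hence the identity by rigidity; a multiplicative-domain computation then identifies $\Psi_K$ with the canonical faithful expectation. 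The contradiction comes not from faithfulness of your scalar functional but from $\Psi_K(a) = 0$ --- because $\psi$ vanishes on the entire conjugation orbit $u_g^* a u_g \subset I$, not merely on $a$ --- against $E_K(a) \neq 0$. You correctly flag this as the most delicate point, but the mechanism you propose for it is the one step that genuinely fails.
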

\begin{proof}
  By \cite{raum15-powers}, simplicity of $\Cstarred(G)$ implies that $G$ is totally disconnected, so we restrict our attention to this setting.  Let $\pi: \cont(\partial_F G) \rtimes_r G \ra \bo(H)$ be a  *-representation, whose image we denote by $A$.  We have to show that $\pi$ is either zero or faithful.  Since $\Cstarred(G)$ is simple, either $\Cstarred(G) \subset \ker \pi$ or $\pi$ is injective on $\Cstarred(G)$.  The former case implies that $\pi$ is zero, since $(p_K)_K$, $K$ running over compact open subgroups of $G$, is an approximate unit for $\cont(\partial_F G) \rtimes_r G$ contained in $\Cstarred(G)$  (see \cite[Proposition 2.13]{raum15-powers}).  So we may assume that $\pi$ is injective on $\Cstarred(G)$ and identify it with its image in $A$.

By Lemma \ref{lem:hahn-banach}, there is a densely defined weight $\psi$ on $A$ such that $\psi|_{\Cstarred(G)}$ is a Plancherel weight.  $\cA = \bigcup_{K \leq G \text{compact open}} p_K A p_K$ is a facial subalgebra of $A$.  Unimodularity of $G$ implies that
\begin{equation*}
 \psi(u_g p_K x p_K u_g^*)
 =
 \psi(p_{gKg^{-1}} u_g x u_g^* p_{g K g^{-1}})
 \leq
 \psi(p_{gKg^{-1}}) \|u_g x u_g^*\|
 =
 \psi(p_K) \|x\|
 \eqcomma
\end{equation*}
for all $g \in G$, $x \in A^+$ and $K \leq G$ compact open.  So we obtain a densely define unbounded completely positive map on $A$ by 
\begin{equation*}
  \Psi_0: 
  \cA \ra \contb^{\mathrm{lu}}(G):
  \Psi_0(x)(g) = \psi(u_g^* x u_g)
  \eqstop
\end{equation*}
By definition $\Psi_0$ is $G$-equivariant and by unimodularity of $G$ we have $\Psi_0(p_K)(g) = \psi(p_{g^{-1}Kg}) = \psi(p_K)$ for all $g \in G$.

Denote by $\rE$ the $G$-equivariant conditional expectation from $\contblu(G)$ onto $\cont(\fb G)$.  Let $\Psi := \rE \circ \Psi_0 \circ \pi$.  Note that for every compact open subgroup $K \leq G$ we have $\Psi(p_K) = \psi(p_K) 1$, so $\Psi$ is non-zero on $\cA$.  If $K \leq G$ is a compact open subgroup and $x \in (\cont(\fb G) \rtimes_r G)^+$, then $\Psi(p_K x p_K) \leq \|x\| \Psi(p_K)$.  Scaling $\Psi$ by $\psi(p_K)$ it restricts to a ucp map $\Psi_K:p_K (\cont(\fb G) \rtimes_r G) p_K \ra \cont(K \bs \fb G)$.  Further the restriction of $\Psi_K$ to $\cont(K \bs \fb G) p_K$ is equivariant with respect to the natural $\cG = K \bs G /K$ action.  Theorem \ref{thm:hypergroup-furstenberg-boundary} then implies that $\Psi_K|_{\cont(K \bs \fb G)p_K}$ is the natural isomorphism $\cont(K \bs \fb G)p_K \cong \cont(K \bs \fb G)$.  It follows that $\cont(K \bs \fb G)$ is in the multiplicative domain of $\Psi_K$.  Since $\Psi_K$ also restricts to the canonical state of $p_K \Cstarred(G) p_K$.

Now consider the unscaled $\Psi$.  We show that for every compact open subgroup $K \leq G$, $\Psi$ restricts to a multiple of the natural conditional expectation $p_K (\cont(\fb G) \rtimes_r G) p_K \ra \cont(K \bs \fb G)$.  Since elements of the form $p_K f u_g p_K$ with $g \in G$ and $f \in \cont(\fb G)$ span a dense subset of $p_K (\cont(\fb G) \rtimes_r G) p_K$ it suffices to show
\begin{equation*}
  \Psi(p_K f u_g p_K) = \mathbb 1_K(g) \int_K \phantom{.}^k f \rmd k
  \eqstop
\end{equation*}
We may assume that $f$ is $L := K \cap g K g^{-1}$ invariant.  Then
\begin{align*}
  p_K f u_g p_K
  & =
  \frac{1}{[K : L]} \sum_{h \in K/L} u_h p_L f u_g p_K \\
  & =
  \frac{1}{[K : L]} \sum_{h \in K/L} \phantom{.}^h f u_{hg} p_K
  \eqstop
\end{align*}
Further we obtain
\begin{align*}
  \Psi(p_K f u_g p_K)
  & =
  \frac{1}{[K : L]} \sum_{h \in K/L}  \Psi(\phantom{.}^h f u_{hg} p_K) \\
  &  =
  \frac{1}{[K : L]} \sum_{h \in K/L}  \phantom{.}^h f \Psi( u_{hg} p_K) \\
  & =
  \frac{1}{[K : L]} \sum_{h \in K/L}  \phantom{.}^h f \mathbb 1_K (hg) \\
  & =
  \mathbb 1_K (g) \int_K \phantom{.}^k f \rmd k
\end{align*}
We conclude that $\Psi$ and hence also $\pi$ are faithful on $p_K \cont(\cont(\fb G) \rtimes_r G) p_K$.  Since $p_K$ is an approximate unit for $\cont(\fb G) \rtimes_r G$, we conclude that $\pi$ is faithful.
\end{proof}

\begin{remark}
  We expect that a converse to Theorem \ref{thm:relating-simplicity} holds.  However, the proof of this would necessarily involve new ideas.  Assuming that $\cont(\partial_F G) \rtimes_r G$ is simple for some unimodular locally compact group $G$, one is tempted to adapt the strategy of \cite{ozawa14-boundaries-lecture-notes} in order to prove simplicity of $\Cstarred(G)$.  However, non-unitality of $\Cstarred(G)$ is a major obstruction here.  Starting with a *-representation $\pi:\Cstarred(G) \ra \bo(H)$, it is not clear that the image $\pi(\Cstarred(G))$ admits any non-zero $G$-equivariant (unbounded) cp map into the Furstenberg boundary.  This highlights the importance of Lemma~\ref{lem:hahn-banach}, which is used to construct non-zero maps into the Furstenberg boundary when proving Theorem~\ref{thm:relating-simplicity}.
\end{remark}

\bibliographystyle{mybibtexstyle}
\bibliography{operatoralgebras}

\vspace{2em}
{\small \parbox[t]{200pt}
  {
    Sven Raum \\
    Westf{\"a}lische Wilhelmsuniversit{\"a}t M{\"u}nster \\
    Fakult{\"a}t Mathematik und Informatik \\
    Einsteinstra{\ss}e 62 \\
    D-48149 M{\"u}nster \\
    Germany \\
    {\footnotesize sven.raum@gmail.com}
  }
}

\end{document}